\documentclass[11pt,reqno]{amsart}
\usepackage{amsmath,amssymb, amscd, amsmath, color, enumitem}
\usepackage{mathrsfs, bm, cite}
\usepackage{graphics}
\usepackage{tikz}
\usepackage{pgfplots}
\usepackage{pgfplotstable}
\usepackage{tkz-fct}
\usetikzlibrary{pgfplots.polar}
\pgfplotsset{compat=newest}
\usepgfplotslibrary{fillbetween}
\usetikzlibrary{patterns}
\input epsf

\numberwithin{equation}{section}
\newtheorem{theorem}{Theorem}[section]
\newtheorem{lemma}[theorem]{Lemma}
\newtheorem{corollary}[theorem]{Corollary}

\theoremstyle{definition}

\newtheorem{example}[theorem]{Example}
\theoremstyle{remark}
\newtheorem{remark}[theorem]{Remark}

\def\XXint#1#2#3{{\setbox0=\hbox{$#1{#2#3}{\int}$ }
\vcenter{\hbox{$#2#3$ }}\kern-.6\wd0}}

\begin{document}

\title[A Unified Topological Analysis of Variable Growth Kirchhoff-Type Equations]{A Unified Topological Analysis of Variable Growth Kirchhoff-Type Equations}

\author[C. S. Goodrich]{Christopher S. Goodrich}
\address{School of Mathematics and Statistics\\
UNSW Sydney\\
Sydney, NSW 2052 Australia}
\email[Christopher S. Goodrich]{c.goodrich@unsw.edu.au}
\author[G. Nakhl]{Gabriel Nakhl}
\address{School of Mathematics and Statistics\\
UNSW Sydney\\
Sydney, NSW 2052 Australia}
\email[Gabriel Nakhl]{g.nakhl@student.unsw.edu.au}
\keywords{Nonlocal differential equation; Kirchhoff-type equation; variable growth; positive solution; convolution.}
\subjclass[2010]{Primary: 33B15, 34B10, 34B18, 42A85, 44A35.  Secondary: 26A33, 46E35, 47H30.}


\begin{abstract}

We consider the nonlocal differential equation
\begin{equation}
-A\left(\int_0^1b(1-s)\big(u(s)\big)^{p(s)}\ ds\right)u''(t)=\lambda f\big(t,u(t)\big)\text{, }t\in(0,1),\notag 
\end{equation}
which is a one-dimensional Kirchhoff-like equation with a nonlocal convolution coefficient.  The novelty of our work involves allowing a variable growth term in the nonlocal coefficient.  By relating the variable growth problem to a constant growth problem, we are able to deduce existence of at least one positive solution to the differential equation when equipped with boundary data.  Our methodology relies on topological fixed point theory.  Because our results treat both the convex and concave regimes, together with both the variable growth and constant growth regimes, our results provide a unified framework for one-dimensional Kirchhoff-type problems.

\end{abstract}

\maketitle

\section{Introduction}

In this paper we consider the existence of at least one positive solution to the nonlocal differential equation
\begin{equation}\label{eq1.1}
-A\left(\left(b*u^{p(\cdot)}\right)(1)\right)u''(t)=\lambda f\big(t,u(t)\big)\text{, }t\in(0,1)
\end{equation}
subject to given boundary data, which can assume a variety of forms such as Dirichlet $u(0)=0=u(1)$ or right-focal $u(0)=0=u'(0)$ to name but two possibilites.  In \eqref{eq1.1} we note that $A\ : \ [0,+\infty)\rightarrow\mathbb{R}$, $f\ : \ [0,1]\times[0,+\infty)\rightarrow[0,+\infty)$, and $p\ : \ [0,1]\rightarrow(0,+\infty)$ are continuous functions, and $b$ is an almost everywhere positive $L^1$ function.  Note that the finite convolution in \eqref{eq1.1} is interpreted as
\begin{equation}
\left(b*u^{p(\cdot)}\right)(1):=\int_0^1b(1-s)\big(u(s)\big)^{p(s)}\ ds.\notag
\end{equation}
The function $p$, in particular, is assumed to satisfy
\begin{equation}
0<p^-<p(t)<p^+<+\infty,\notag
\end{equation}
where, depending upon the section, the constants $p^-$ and $p^+$ live in specified intervals -- e.g., $1<p^-<p^+$.  Our results in this paper treat all three potential regimes.
\begin{enumerate}
\item $1<p^-<p^+$

\item $0<p^-<p^+\le1$

\item $p^-<1<p^+$
\end{enumerate}
In this sense, the results provide a unified framework for studying problem \eqref{eq1.1}.

Regarding \eqref{eq1.1}, note that if $b(t)\equiv1$ and $p(t)\equiv p_0>0$, then we are lead to the important model case
\begin{equation}
-A\left(\Vert u\Vert_{L^{p_0}}^{p_0}\right)u''(t)=\lambda f\big(t,u(t)\big),\notag
\end{equation}
which is closely related to a one-dimensional steady-state version of the classical Kirchhoff parabolic PDE
\begin{equation}
u_{tt}-A\left(\Vert Du\Vert_{L^2}^{2}\right)\Delta u=\lambda f\big(\bm{x},u(\bm{x})\big).\notag
\end{equation}
We note that the inclusion of the kernel $b$ in the convolution in \eqref{eq1.1} allows us to incorporate a variety of physically meaningful nonlocal elements, chief amongst these being if, for $t>0$, we put $\displaystyle b(t):=\frac{1}{\Gamma(\alpha)}t^{\alpha-1}$, $0<\alpha<1$, which leads to a Riemann-Liouville fractional integral \cite{goodrich0,lan2,lan3,podlubny1,webb0} of order $\alpha$.

The study of variable exponent functions has become an important area of research in recent decades, driven, in addition to their intrinsic mathematical interest, by their ability to model phenomena with non-standard growth conditions.  Applications of variable exponent functions have been explored in various fields. For example, Rajagopal and R\r{u}\v{z}i\v{c}ka \cite{rajagopal1} demonstrated how these functions can effectively model the behaviour of electrorheological fluids, where the viscosity changes in response to an electric field. The variable exponent allows for capturing the fluid’s non-Newtonian behaviour, which cannot be accurately represented by models with constant exponents.  Zhikov \cite{zhikov1,zhikov4,zhikov2,zhikov3} also studied such problems.  They have also been well studied within the context of regularity theory -- see, for example, Ragusa and Tachikawa \cite{ragusa4,ragusatachikawa1}.  And more recently Garc\'{i}a-Huidobro, et al. \cite{garcia1} have considered a class of \underline{local} one-dimensional boundary value problems with variable exponents.

At the same time, there is a vast literature on nonlocal ordinary and partial differential equations.  As with variable growth, part of the interest in nonlocal differential equations is their potential applications -- e.g., applications to beam deflection \cite{infantepietramala7}, chemical reactor theory \cite{infante11}, and thermodynamics \cite{cabada1}.  Within this context, two commonly studied problems are
\begin{equation}
-A\left(\Vert u\Vert_{L^p}^{p}\right)\Delta u(\bm{x})=\lambda f(\bm{x},u(\bm{x})\big)\text{, }\bm{x}\in\Omega\subset\mathbb{R}^{n}\notag
\end{equation}
and
\begin{equation}
-A\left(\Vert Du\Vert_{L^p}^{p}\right)\Delta u(\bm{x})=\lambda f(\bm{x},u(\bm{x})\big)\text{, }\bm{x}\in\Omega\subset\mathbb{R}^{n},\notag
\end{equation}
together with their one-dimensional equivalents.  Such problems are Kirchhoff-like equations, given their obvious connection to the classical Kirchhoff equation mentioned earlier.  A very common assumption \cite{afrouzi1,alves1,azzouz1,bellamouchi1,biagi1,boulaaras0,boulaaras1,chung1,correa1,correa2,do4,graef2,infante0,infante2,infante99,li1,stanczy1,wang1,yanma1,yan1} in such problems is to assume that the nonlocal coefficient, $A$, is strictly positive -- i.e., $A(t)>0$ for $t\ge0$; ostensibly this is a perfectly reasonable assumption so as to avoid degeneracy in the differential equation.  At the same time, rarely authors have deployed alternative assumptions.  For example, Ambrosetti and Arcoya \cite{ambrosetti1} allow $A$ to vanish at $0$, whereas Delgado, et al. \cite{delgado1} allow $A$ to vanish at one particular point, which need not be zero.  Finally, Santos J\'{u}nior and Siciliano \cite{santos1}, in the setting of an $L^2$ nonlocal element, require that $A$ be nonzero on a neighbourhood of zero.

On the other hand, over the past four years the first author, beginning with \cite{goodrich8} and further developed in \cite{goodrich10,goodrich12,goodrich15,goodrich16,goodrich20}, has developed a new methodology for the analysis of the existence of solution to one-dimensional nonlocal differential equations, with additional extensions having been provided by the first author and Lizama \cite{goodrichlizama3}, Hao and Wang \cite{hao1}, Shibata \cite{shibata3}, and Song and Hao \cite{song1}; some related nonexistence results may be found in \cite{goodrich19,goodrich24,shibata1,shibata2}.  Unlike the methodologies referenced in the previous paragraph, the new methodology requires only that $A$ be positive on a set of possibly very small measure with no \emph{a priori} location restriction -- e.g., the nonlocal element need not be nonzero on a neighbourhood of zero.  Moreover, the methodology easily accommodates a variety of nonlocal elements by way of finite convolution.

We note that all of the nonlocal differential equations papers referenced thus far treat \emph{constant} growth nonlocal equations.  By this we mean that the results concern problems similar to \eqref{eq1.1} but with $p(t)\equiv p_0$.  A natural question, then, given the interest in variable growth problems, is whether the methodology developed in \cite{goodrich8} can be extended to the variable growth setting.  This is not a trivial question, in fact, because the methods one usually uses in variable growth problems, say, for example, in regularity theory, do not naturally carry over to \eqref{eq1.1}.  In regularity theory, for instance, one typically works on sets sufficiently small in measure such that the exponent $p$ does not vary much.  Then one can then approximate $p(x)$ by a constant, via some regularity assumption on $p$ such as H\"{o}lder continuity, and thus analyse a constant exponent problem for which better estimates are available -- cf., \cite[(3.10)]{goodrichragusa1}.  In the setting of \eqref{eq1.1} one cannot do this because the estimates we require must be valid on the \emph{entire} interval $[0,1]$ not simply some very small subset of it.  So, there is no \emph{a priori} way to import the variable exponent techniques from regularity theory to the setting of problem \eqref{eq1.1}.

In \cite{goodrich23} the first author made an initial attempt to study \eqref{eq1.1}, though only in the setting where $1<p^-\le p(t)\le p^+<+\infty$.  There he imposed conditions which ensured that $\Vert u\Vert_{\infty}\ge1$.  In so doing, he could then pass from $\Vert u\Vert_{\infty}^{p(t)}$ to a constant exponent via the inequality $1\le\Vert u\Vert_{\infty}^{p^-}\le\Vert u\Vert_{\infty}^{p(t)}\le\Vert u\Vert_{\infty}^{p^+}$, and this was sufficient to obtain an existence result for the problem.  However, this methodology required making some undesirable assumptions (e.g., an extra growth assumption on the nonlinearity $f$) in order to ensure that $\Vert u\Vert_{\infty}\ge1$.

Very recently, the first author \cite{goodrich27} extended the methodology of \cite{goodrich23} to the Kirchhoff-type problem
\begin{equation}
-A\left(\left(b*|u'|^{p(\cdot)}\right)(1)\right)u''(t)=\lambda f\big(t,u(t)\big)\text{, }t\in(0,1),\notag
\end{equation}
which is \eqref{eq1.1} but with $|u'|$ replacing $u$; as in \cite{goodrich23}, the existence results of \cite{goodrich27} assumed that $p(x)>1$.  An important novelty introduced in \cite{goodrich27} was an inequality (see Lemma \ref{lemma2.1}) for passing from the variable exponent to a corresponding constant exponent.  This allowed for more refined estimates than were available in \cite{goodrich23}.

In this paper we aim to use the technique introduced in \cite{goodrich27} in order to refine and extend considerably the results of \cite{goodrich23}.  In particular, as mentioned earlier, we treat not only the convex-type setting in which $1<p(x)<+\infty$, but also the concave-type case in which $0<p(x)\le1$ as well as the mixed-type case in which $0<p(x)<+\infty$.  In addition, even in the case $1<p(x)<+\infty$, which explicitly overlaps with \cite{goodrich23}, we are able to improve the results of \cite{goodrich23} by removing some of the assumptions imposed there (e.g., an extra growth assumption imposed on $f$).

We conclude by mentioning the outline of the remainder of this paper.  We begin in Section 2 by first describing our notation and other necessary preliminaries.  Then we develop an existence theory for \eqref{eq1.1} in case $p(x)>1$, which is the convex-like case.  In Section 3 we develop a parallel theory for \eqref{eq1.1} in case $0<p(x)\le 1$, which is the concave-like case.  Finally, in Section 4 we demonstrate that by combining the preceding results we can actually allow for $0<p(x)<+\infty$, which is to say an exponent that can freely shift between the concave and convex regimes.  Consequently, the totality of our results is a unified theory for one-dimensional variable growth nonlocal equations having the form \eqref{eq1.1}.  We further emphasise that since our results subsume the constant exponent case, we also provide a unified theory for that setting as well.

Finally, let us give a brief explanation as to why we separate the results into the three regimes outlined in the previous paragraph.  The reason is twofold.  First, since the most general case (i.e., Section 4) builds off of the more restrictive cases (i.e., Sections 2 and 3), there is a certain logic to organising the manuscript in this way.  But, secondly, if one has $1<p(x)<+\infty$, then the more specialised results of Section 2 will be better since we are able to give more specific estimates with results that are tailored to that particular regime.  So, there are also good mathematical reasons to structure the results as we have, even though at first glance there might seem to be a certain inefficiency.

\section{Preliminaries and Existence Theory for \eqref{eq1.1} in Case $p(x)>1$}

We begin by mentioning the notation that we use throughout the remainder of this paper.  By $\mathscr{C}\big([0,1]\big)$ we denote the space of continuous functions on $[0,1]$.  We always assume that this space is equipped with the norm $\Vert\cdot\Vert_{\infty}$, which is the usual maximum norm on $\mathscr{C}\big([0,1]\big)$.  So normed, the space $\mathscr{C}\big([0,1]\big)$ is Banach.  In addition, by $\bm{1}$ we denote the constant function $\bm{1}\ : \ \mathbb{R}\rightarrow\{1\}$, and likewise $\bm{0}\ : \ \mathbb{R}\rightarrow\{0\}$.  By $*$ we denote the final convolution functional on $[0,1]$ so that
\begin{equation}
(u*v)(t):=\int_0^tu(t-s)v(s)\ ds\text{, }0\le t\le 1,\notag
\end{equation}
for $u$, $v\in L^1\big((0,1)\big)$.  Finally, for a given continuous function $h\ : \ [0,1]\times[0,+\infty)\rightarrow[0,+\infty)$ and real numbers $0\le a<b\le1$ and $0\le c<d<+\infty$ we denote by $h_{[a,b]\times[c,d]}^{m}$ and $h_{[a,b]\times[c,d]}^{M}$ the following quantities.
\begin{equation}
\begin{split}
h_{[a,b]\times[c,d]}^{m}&:=\min_{(t,u)\in[a,b]\times[c,d]}h(t,u)\\
h_{[a,b]\times[c,d]}^{M}&:=\max_{(t,u)\in[a,b]\times[c,d]}h(t,u)\notag
\end{split}
\end{equation}

We next describe the general assumptions we use when studying equation \eqref{eq1.1}.  Note that in condition (H3) the function $G$ is a Green's function, which will encode boundary data that a solution of \eqref{eq1.1} will be required to satisfy.  For example, if $G$ is defined by
\begin{equation}\label{eq2.1mmm}
G(t,s):=\begin{cases} t(1-s)\text{, }&0\le t\le s\le 1\\ s(1-t)\text{, }&0\le s\le t\le 1\end{cases},
\end{equation}
then $G$ encodes Dirichlet boundary data.  Similarly, if
\begin{equation}\label{eq2.2ppp}
G(t,s):=\begin{cases} t\text{, }&0\le t\le s\le 1\\ s\text{, }&0\le s\le t\le 1\end{cases},
\end{equation}
then $G$ encodes right-focal boundary data.  Please see Erbe and Wang \cite{erbe1} for additional details.

\begin{list}{}{\setlength{\leftmargin}{.5in}\setlength{\rightmargin}{0in}}
\item[\textbf{H1:}] The functions $A\ : \ [0,+\infty)\rightarrow\mathbb{R}$, $f\ : \ [0,1]\times[0,+\infty)\rightarrow[0,+\infty)$, and $b\ : \ (0,1]\rightarrow[0,+\infty)$ satisfy the following properties.
\begin{enumerate}
\item Each of $A$ and $f$ is continuous on their respective domain.

\item Both $b\in L^1\big((0,1];[0,+\infty)\big)$ and $(b*\bm{1})(1)\neq0$.

\item There exist numbers $0<\rho_1<\rho_2$ such that $A(t)>0$ for each $t\in\big[\rho_1,\rho_2\big]$.
\end{enumerate}

\item[\textbf{H2:}] The function $p\ : \ [0,1]\rightarrow(1,+\infty)$ is continuous, and there exist real numbers $p^-$ and $p^+$ such that, for each $t\in[0,1]$,
\begin{equation}
1<p^-\le p(t)\le p^+<+\infty.\notag
\end{equation}

\item[\textbf{H3:}] The continuous function $G\ : \ [0,1]\times[0,1]\rightarrow[0,+\infty)$ satisfies the following properties.
\begin{enumerate}
\item Denote by $\mathscr{G}\ : \ [0,1]\rightarrow[0,+\infty)$ the function defined by
\begin{equation}
\mathscr{G}(s):=\max_{t\in[0,1]}G(t,s).\notag
\end{equation}
There exist numbers $0\le\alpha<\beta\le1$ and a number $\eta_0:=\eta_0(\alpha,\beta)\in(0,1]$ such that
\begin{equation}
\min_{t\in[\alpha,\beta]}G(t,s)\ge\eta_0\mathscr{G}(s)\text{, for each }s\in[0,1].\notag
\end{equation}

\item The number $C_0$ defined by
\begin{equation}
C_0:=\sup_{s\in(0,1)}\frac{1}{\mathscr{G}(s)}\int_0^1G(t,s)\ dt\notag
\end{equation}
satisfies $C_0>0$.
\end{enumerate}
\end{list}
We note that both the Green's functions \eqref{eq2.1mmm}--\eqref{eq2.2ppp} mentioned above, together with many others, satisfy condition (H3) -- see, for example, \cite{erbe1,goodrich5}.

With our general assumptions (H1)--(H3) in hand, we conclude the preliminaries by describing the functional analytic framework in which we study \eqref{eq1.1}.  To this end, denote by $\mathscr{K}\subseteq\mathscr{C}\big([0,1]\big)$ the positive order cone
\begin{equation}
\mathscr{K}:=\Big\{u\in\mathscr{C}\big([0,1]\big)\ : \ u\ge0\text{, }\min_{t\in[\alpha,\beta]}u(t)\ge\eta_0\Vert u\Vert_{\infty}\text{, and }(\bm{1}*u)(1)\ge C_0\Vert u\Vert_{\infty}\Big\}.\notag
\end{equation}
Attendant to the cone $\mathscr{K}$, for $\rho>0$ we denote by $\widehat{V}_{\rho}\subseteq\mathscr{K}$ the (relatively) open set
\begin{equation}
\widehat{V}_{\rho}:=\left\{u\in\mathscr{K}\ : \ \left(b*u^{p(\cdot)}\right)(1)<\rho\right\}\subseteq\mathscr{K}.\notag
\end{equation}
As will be used repeatedly in the sequel, we observe that
\begin{equation}
\partial\widehat{V}_{\rho}=\left\{u\in\mathscr{K}\ : \ \left(b*u^{p(\cdot)}\right)(1)=\rho\right\}.\notag
\end{equation}
We note that $\widehat{V}_{\rho}$ was introduced in \cite{goodrich23}.  Finally, because we will study \eqref{eq1.1} via a topological fixed point approach, we define the operator $T$ by
\begin{equation}
(Tu)(t):=\lambda\int_0^1\left(A\left(\left(b*u^{p(\cdot)}\right)(1)\right)\right)^{-1}G(t,s)f\big(s,u(s)\big)\ ds,\notag
\end{equation}
and we note that a nontrivial fixed point of $T$
\begin{enumerate}
\item is a nontrivial solution of \eqref{eq1.1};

\item satisfies the boundary data encoded by the Green's function $G$; and so

\item is a positive solution of the problem defined by (1)--(2).
\end{enumerate}
As will be specified precisely in the existence theorems to follow, we will restrict $T$ to a closed annular subset of $\mathscr{K}$ on which $\displaystyle A\left(\left(b*u^{p(\cdot)}\right)(1)\right)>0$.  In this way, then, $T$ will be well defined.

With the preliminaries dispatched, our first lemma is a pointwise estimate that allows us to switch from variable exponent growth to constant exponent growth.  This lemma, which was proved by the first author in \cite{goodrich27}, states a fundamental pointwise relationship between a function taken to a variable exponent and the same function taken to a constant exponent.  The lemma may be thought of as a global version of the local variable-to-constant exponent estimates used in regularity theory -- cf., \cite{goodrichragusa1}.  The proof of this lemma may be found in \cite[Lemma 2.6]{goodrich27}.

\begin{lemma}\label{lemma2.1}
Let $f\ : \ [0,1]\rightarrow[0,+\infty)$ be given.  Suppose that $p\ : \ [0,1]\rightarrow(1,+\infty)$ satisfies condition (H2).  Then, given any constant $q$ satisfying $1\le q<p^-$, for each $t\in[0,1]$ it holds that
\begin{equation}
\big(f(t)\big)^{\frac{p(t)}{q}}\ge 2^{1-\frac{p^+}{q}}\big(f(t)\big)^{\frac{p^-}{q}}-1.\notag
\end{equation}
\end{lemma}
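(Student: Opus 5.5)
We prove the inequality pointwise: fix $t\in[0,1]$, write $x:=f(t)\ge0$, and split into the cases $x\ge1$ and $0\le x<1$. The function $f$ plays no role beyond supplying the nonnegative number $x$, so everything reduces to an elementary inequality in one real variable, with exponents $a:=p(t)/q$, $a^-:=p^-/q$, $a^+:=p^+/q$ satisfying $1<a^-\le a\le a^+$ (using $q<p^-$ and (H2)).

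\emph{Case $x\ge1$.} The map $s\mapsto x^{s}$ is nondecreasing, so $x^{a}\ge x^{a^-}$. Since $q\le p^-\le p^+$, we have $1-\frac{p^+}{q}\le0$, hence $2^{1-p^+/q}\le1$. Therefore
\begin{equation}
x^{p(t)/q}\ge x^{p^-/q}\ge 2^{1-\frac{p^+}{q}}x^{p^-/q}>2^{1-\frac{p^+}{q}}x^{p^-/q}-1.\notag
\end{equation}

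\emph{Case $0\le x<1$.} Here $x^{a^-}\le 1$ because $a^->0$. Together with $2^{1-a^+}\le1$ this gives
\begin{equation}
2^{1-a^+}x^{a^-}-1\le 0\le x^{a}.\notag
\end{equation}
So the right-hand side is nonpositive while the left-hand side is nonnegative, and the inequality holds trivially.

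Combining the two cases proves the lemma for each $t$. I expect no step to be a real obstacle; the only thing to check is the sign of the exponent $1-p^+/q$, which is where the hypothesis $q<p^-\le p^+$ enters. The constant $2^{1-p^+/q}$ is evidently not sharp for this argument: a convexity-type proof, writing $x^{a}\ge (x^{a^-}+\dots)$ via $(s+1)^{a^+}\le 2^{a^+-1}(s^{a^+}+1)$, would naturally produce that constant. However, the two-case argument above already yields the stated bound, and I would present that.
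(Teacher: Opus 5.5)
Your proof is correct, but it takes a different route from the one behind the paper's constant. The paper does not prove Lemma 2.1 itself; it defers to \cite[Lemma 2.6]{goodrich27}. The factor $2^{1-p^+/q}$ is the signature of a convexity chain: since $p(t)/q\ge1$,
\[
x^{p^-/q}\le(x+1)^{p^-/q}\le(x+1)^{p(t)/q}\le2^{\frac{p(t)}{q}-1}\big(x^{p(t)/q}+1\big)\le2^{\frac{p^+}{q}-1}\big(x^{p(t)/q}+1\big),
\]
and rearranging gives the claim. This is the template the paper uses in Lemma \ref{lemma3.1} (with subadditivity in place of convexity) and in the upper bound of Lemma \ref{lemma4.1}. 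It is a single chain with no case split, but it genuinely needs $p(t)\ge q$, and it pays the constant.

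In your closing aside, the convexity inequality must be applied with exponent $p(t)/q$ and then followed by $2^{p(t)/q-1}\le2^{p^+/q-1}$. Applying it with exponent $p^+/q$ would produce $x^{p^+/q}$ rather than $x^{p(t)/q}$.

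Your two-case argument is essentially the paper's own proof of the lower bound in Lemma \ref{lemma4.1}, and it gives strictly more. Since it never needs the factor, it proves $x^{p(t)/q}\ge x^{p^-/q}-1$ for every $q>0$, with no convexity at all. Used in Lemma \ref{lemma2.3} and Theorem \ref{theorem2.8}, that sharper inequality would delete the factor $2^{(p^+-q)/p^-}$ from $M_{\rho}$ and the factor $2^{1-p^+}$ from condition (1). That would bring Section 2 in line with the factor-free bounds of Sections 3 and 4.
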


The proof of our next result may be found in \cite[Corollary 2.7]{goodrich27}.

\begin{corollary}\label{corollary2.2}
Suppose that the hypotheses of Lemma \ref{lemma2.1} are true.  Then under the additional assumption that $f\in\mathscr{C}\big([0,1];[0,+\infty)\big)$ it holds that
\begin{equation}
\begin{split}
\int_0^1\big(f(t)\big)^{\frac{p(t)}{q}}\ dt&\ge2^{1-\frac{p^+}{q}}\int_0^1\big(f(t)\big)^{\frac{p^-}{q}}\ dt-1\\
&=2^{1-\frac{p^+}{q}}\Vert f\Vert_{L^{\frac{p^-}{q}}}^{\frac{p^-}{q}}-1.\notag
\end{split}
\end{equation}
\end{corollary}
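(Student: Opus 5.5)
The plan is to integrate the pointwise inequality of Lemma \ref{lemma2.1} over $[0,1]$. First I would check that every integral in the statement is finite and well defined. Since $f\in\mathscr{C}\big([0,1];[0,+\infty)\big)$ and $p$ is continuous with $1<p^-\le p(t)\le p^+$ by (H2), the map $t\mapsto\big(f(t)\big)^{\frac{p(t)}{q}}$ is continuous and nonnegative on $[0,1]$. Its continuity follows from writing it as $\exp\big(\tfrac{p(t)}{q}\log f(t)\big)$ where $f(t)>0$, and from the bound $0\le\big(f(t)\big)^{\frac{p(t)}{q}}\le\big(f(t)\big)^{\frac{p^-}{q}}\to0$ near zeros of $f$, which is valid because $f<1$ there. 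The same reasoning shows that $t\mapsto\big(f(t)\big)^{\frac{p^-}{q}}$ is continuous. Both functions are therefore Riemann (and Lebesgue) integrable on $[0,1]$.

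Next I would apply Lemma \ref{lemma2.1} at each $t\in[0,1]$, which gives
\begin{equation}
\big(f(t)\big)^{\frac{p(t)}{q}}\ge 2^{1-\frac{p^+}{q}}\big(f(t)\big)^{\frac{p^-}{q}}-1.\notag
\end{equation}
By monotonicity and linearity of the integral, integrating over $[0,1]$ yields
\begin{equation}
\int_0^1\big(f(t)\big)^{\frac{p(t)}{q}}\ dt\ge2^{1-\frac{p^+}{q}}\int_0^1\big(f(t)\big)^{\frac{p^-}{q}}\ dt-\int_0^1 1\ dt.\notag
\end{equation}
The last term equals $1$ because the interval has unit length. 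This gives the first line of the statement.

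For the equality, I note that $f\ge0$, so $\int_0^1\big(f(t)\big)^{\frac{p^-}{q}}\,dt=\int_0^1|f(t)|^{\frac{p^-}{q}}\,dt$. This is exactly $\Vert f\Vert_{L^{\frac{p^-}{q}}}^{\frac{p^-}{q}}$ by the definition of the $L^r$ (quasi-)norm with $r=\frac{p^-}{q}$; here $r>1$ since $q<p^-$. No step is a genuine obstacle. The only point needing care is the measurability and continuity of the variable-exponent integrand at zeros of $f$, and the sandwich bound in the first paragraph handles it.
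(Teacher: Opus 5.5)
Your proposal is correct and follows the intended route: the corollary is the pointwise inequality of Lemma \ref{lemma2.1} integrated over $[0,1]$, with the constant $-1$ integrating to $-1$ because the interval has unit length (the paper gives no proof of its own and defers to \cite[Corollary 2.7]{goodrich27}). Your check that $t\mapsto\big(f(t)\big)^{p(t)/q}$ is continuous at zeros of $f$ is correct, and the paper does not spell it out.
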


Our next result, which is our first original result of this paper, establishes a lower bound on $\Vert u\Vert_{\infty}$ provided that $u\in\partial\widehat{V}_{\rho}$.  Such a result in the variable growth problem was originally reported in \cite[Lemma 2.4]{goodrich23}.  Our result, Lemma \ref{lemma2.3aaa}, establishes the same lower bound as \cite[Lemma 2.4]{goodrich23}.  However, Lemma \ref{lemma2.3aaa} shows that the lower bound can be written in a different way, which does not require the use of the minimum of two quantities as in \cite{goodrich23}.  And this will be useful in certain of the estimates we wish to use later.

\begin{lemma}\label{lemma2.3aaa}
Suppose that $u\in\partial\widehat{V}_{\rho}$ for some $\rho>0$.  Then
\begin{equation}
\Vert u\Vert_{\infty}\ge\left(\frac{\rho}{(b*\bm{1})(1)}\right)^{\frac{1}{p^+}}+\varepsilon(\rho,b),\notag
\end{equation}
where
\begin{equation}
\varepsilon(\rho,b):=\begin{cases} \left(\frac{\rho}{(b*\bm{1})(1)}\right)^{\frac{1}{p^-}}-\left(\frac{\rho}{(b*\bm{1})(1)}\right)^{\frac{1}{p^+}}\text{, }&0<\left(\frac{\rho}{(b*\bm{1})(1)}\right)^{\frac{1}{p^+}}<1\\ 0\text{, }&\left(\frac{\rho}{(b*\bm{1})(1)}\right)^{\frac{1}{p^+}}\ge1\end{cases}.\notag
\end{equation}
\end{lemma}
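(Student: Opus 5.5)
The plan is to bound the defining quantity $\rho=\left(b*u^{p(\cdot)}\right)(1)$ above by $(b*\bm{1})(1)$ times a power of $\Vert u\Vert_{\infty}$, and then split into two cases according to whether $\Vert u\Vert_{\infty}\ge1$ or $\Vert u\Vert_{\infty}<1$. Throughout, write $\kappa:=\frac{\rho}{(b*\bm{1})(1)}$, which is well defined and positive by (H1.2).

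Since $u\in\partial\widehat{V}_{\rho}$, we have $\rho=\int_0^1b(1-s)\big(u(s)\big)^{p(s)}\ ds$, with $0\le u(s)\le\Vert u\Vert_{\infty}$. If $\Vert u\Vert_{\infty}\ge1$, then $\big(u(s)\big)^{p(s)}\le\Vert u\Vert_{\infty}^{p(s)}\le\Vert u\Vert_{\infty}^{p^+}$. This gives $\rho\le(b*\bm{1})(1)\Vert u\Vert_{\infty}^{p^+}$, that is, $\Vert u\Vert_{\infty}\ge\kappa^{1/p^+}$. If instead $\Vert u\Vert_{\infty}<1$, then $\Vert u\Vert_{\infty}^{p(s)}\le\Vert u\Vert_{\infty}^{p^-}$, so $\Vert u\Vert_{\infty}\ge\kappa^{1/p^-}$. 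In both cases we therefore have $\Vert u\Vert_{\infty}\ge\min\{\kappa^{1/p^+},\kappa^{1/p^-}\}$; this is the form stated in \cite[Lemma 2.4]{goodrich23}.

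It remains to match this with the case split in the definition of $\varepsilon(\rho,b)$. Note that $\kappa^{1/p^+}\ge1$ if and only if $\kappa\ge1$, which holds if and only if $\kappa^{1/p^-}\ge1$.

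\textbf{Case $\kappa^{1/p^+}\ge1$.} Here $\varepsilon=0$, so we must show $\Vert u\Vert_{\infty}\ge\kappa^{1/p^+}$. If $\Vert u\Vert_{\infty}\ge1$, this is the first bound above. If $\Vert u\Vert_{\infty}<1$, the second bound gives $\Vert u\Vert_{\infty}\ge\kappa^{1/p^-}\ge1$, a contradiction. So this subcase cannot occur.

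\textbf{Case $0<\kappa^{1/p^+}<1$.} Here $\kappa<1$, so $\kappa^{1/p^-}\le\kappa^{1/p^+}<1$, and the target bound $\kappa^{1/p^+}+\varepsilon$ equals $\kappa^{1/p^-}$. If $\Vert u\Vert_{\infty}<1$, the second bound gives exactly this. If $\Vert u\Vert_{\infty}\ge1$, then trivially $\Vert u\Vert_{\infty}\ge1>\kappa^{1/p^-}$.

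The argument is elementary. The only point that needs care is the monotonicity of $x\mapsto x^{a}$ in the exponent $a$ on $[0,1]$ versus on $[1,\infty)$, together with the fact that $\kappa\ge1$ and $\kappa<1$ determine which of $\kappa^{1/p^\pm}$ is the minimum. This bookkeeping is where the proof could go wrong, but no genuine obstacle is expected.
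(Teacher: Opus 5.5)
Your proof is correct and follows essentially the same route as the paper. You bound $\rho$ by $(b*\bm{1})(1)\Vert u\Vert_{\infty}^{p^{\pm}}$ according to whether $\Vert u\Vert_{\infty}$ lies below or above $1$, match this to the two cases in the definition of $\varepsilon(\rho,b)$, and use the same contradiction to force $\Vert u\Vert_{\infty}\ge1$ when $\kappa\ge1$; the only cosmetic difference is that you derive both norm bounds first and then split on $\kappa$, whereas the paper nests the split on $\Vert u\Vert_{\infty}$ inside the split on $\kappa$.
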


\begin{proof}
Suppose first that
\begin{equation}\label{eq2.1}
0<\left(\frac{\rho}{(b*\bm{1})(1)}\right)^{\frac{1}{p^+}}<1.
\end{equation}
We consider cases depending upon the magnitude of $\Vert u\Vert_{\infty}$.  So, first suppose that $\Vert u\Vert_{\infty}\le1$.  Then
\begin{equation}\label{eq2.2}
\rho=\left(b*u^{p(\cdot)}\right)(1)\le\left(b*\Vert u\Vert_{\infty}^{p(\cdot)}\right)(1)\le\left(b*\Vert u\Vert_{\infty}^{p^-}\bm{1}\right)(1).
\end{equation}
But then \eqref{eq2.2}, combined with the definition of the function $\varepsilon$ in the statement of the lemma, implies that
\begin{equation}\label{eq2.3}
\Vert u\Vert_{\infty}\ge\left(\frac{\rho}{(b*\bm{1})(1)}\right)^{\frac{1}{p^-}}=\left(\frac{\rho}{(b*\bm{1})(1)}\right)^{\frac{1}{p^+}}+\varepsilon(\rho,b).
\end{equation}
On the other hand, in case $\Vert u\Vert_{\infty}>1$, recalling assumption \eqref{eq2.1}, we see that
\begin{equation}\label{eq2.5}
\Vert u\Vert_{\infty}>1>\underbrace{\left(\frac{\rho}{(b*\bm{1})(1)}\right)^{\frac{1}{p^+}}}_{\in(0,1)}\ge\left(\frac{\rho}{(b*\bm{1})(1)}\right)^{\frac{1}{p^-}}.
\end{equation}
Thus, inequality \eqref{eq2.5} implies that
\begin{equation}\label{eq2.7}
\Vert u\Vert_{\infty}\ge\left(\frac{\rho}{(b*\bm{1})(1)}\right)^{\frac{1}{p^+}}+\varepsilon(\rho,b).
\end{equation}
So, in light of inequalities \eqref{eq2.3} and \eqref{eq2.7}, we conclude that in case \eqref{eq2.1} holds, the claimed formula is true.

Next we consider the case
\begin{equation}\label{eq2.8}
\left(\frac{\rho}{(b*\bm{1})(1)}\right)^{\frac{1}{p^+}}\ge1.
\end{equation}
Note, again, that
\begin{equation}\label{eq2.9}
\rho=\left(b*u^{p(\cdot)}\right)(1)\le\left(b*\Vert u\Vert_{\infty}^{p(\cdot)}\right)(1).
\end{equation}
Now, if $\Vert u\Vert_{\infty}<1$, then \eqref{eq2.9} implies that
\begin{equation}
\rho=\left(b*u^{p(\cdot)}\right)(1)\le\left(b*\Vert u\Vert_{\infty}^{p(\cdot)}\right)(1)\le\left(b*\Vert u\Vert_{\infty}^{p^-}\bm{1}\right)(1),\notag
\end{equation}
from which it follows that
\begin{equation}\label{eq2.10}
\Vert u\Vert_{\infty}^{p^-}\ge\frac{\rho}{(b*\bm{1})(1)}.
\end{equation}
But then inequality \eqref{eq2.10}, recalling inequality \eqref{eq2.8}, implies that
\begin{equation}
\Vert u\Vert_{\infty}\ge\left(\frac{\rho}{(b*\bm{1})(1)}\right)^{\frac{1}{p^-}}\ge\left(\frac{\rho}{(b*\bm{1})(1)}\right)^{\frac{1}{p^+}}\ge1,\notag
\end{equation}
which is a contradiction.  Therefore, we conclude that if inequality \eqref{eq2.8} holds, then $\Vert u\Vert_{\infty}\ge1$.  But since $\Vert u\Vert_{\infty}\ge1$, it follows that
\begin{equation}
\begin{split}
\rho=\left(b*u^{p(\cdot)}\right)(1)&\le\left(b*\Vert u\Vert_{\infty}^{p(\cdot)}\bm{1}\right)(1)\\
&\le\left(b*\Vert u\Vert_{\infty}^{p^+}\bm{1}\right)(1)\\
&=\Vert u\Vert_{\infty}^{p^+}(b*\bm{1})(1),\notag
\end{split}
\end{equation}
and so,
\begin{equation}
\Vert u\Vert_{\infty}\ge\left(\frac{\rho}{(b*\bm{1})(1)}\right)^{\frac{1}{p^+}},\notag
\end{equation}
which verifies the claimed formula in case \eqref{eq2.8} holds.

Since the preceding cases are exhaustive, we conclude that whenever $u\in\partial\widehat{V}_{\rho}$, it follows that
\begin{equation}
\Vert u\Vert_{\infty}\ge\left(\frac{\rho}{(b*\bm{1})(1)}\right)^{\frac{1}{p^+}}+\varepsilon(\rho,b),\notag
\end{equation}
with $\varepsilon(\rho,b)$ defined as in the statement of the lemma.  And this completes the proof of the lemma.
\end{proof}

Our next lemma establishes an upper bound on $\Vert u\Vert_{\infty}$ whenever $u$ is in either $\widehat{V}_{\rho}$ or its boundary.

\begin{lemma}\label{lemma2.3}
Assume that, for some $q\in\big(1,p^-\big)$,
\begin{equation}
b^{\frac{1}{1-q}}\in L^1\big((0,1]\big).\notag
\end{equation}
For each $\rho>0$, if $u\in\widehat{V}_{\rho}$, then
\begin{equation}
\Vert u\Vert_{\infty}<C_0^{-1}2^{\frac{p^{+}-q}{p^-}}\left[\rho^{\frac{1}{q}}\left(\left(b^{\frac{1}{1-q}}*\bm{1}\right)(1)\right)^{\frac{q-1}{q}}+1\right]^{\frac{q}{p^-}}.\notag
\end{equation}
Furthermore, if $u\in\partial\widehat{V}_{\rho}$, then
\begin{equation}
\Vert u\Vert_{\infty}\le C_0^{-1}2^{\frac{p^{+}-q}{p^-}}\left[\rho^{\frac{1}{q}}\left(\left(b^{\frac{1}{1-q}}*\bm{1}\right)(1)\right)^{\frac{q-1}{q}}+1\right]^{\frac{q}{p^-}}.\notag
\end{equation}
\end{lemma}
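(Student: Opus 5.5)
We will derive the bound from H\"older's inequality applied to the convolution, combined with Corollary \ref{corollary2.2} and the cone property $(\bm{1}*u)(1)\ge C_0\Vert u\Vert_{\infty}$. Fix $u\in\widehat{V}_{\rho}$, so that $\left(b*u^{p(\cdot)}\right)(1)<\rho$. First we write
\begin{equation}
\int_0^1\big(u(s)\big)^{\frac{p(s)}{q}}\ ds=\int_0^1 b(1-s)^{-\frac1q}\Big(b(1-s)\big(u(s)\big)^{p(s)}\Big)^{\frac1q}\ ds,\notag
\end{equation}
and then apply H\"older with exponents $q$ and $\frac{q}{q-1}$. The factor $b^{-\frac1q}$ raised to $\frac{q}{q-1}$ gives $b^{\frac{1}{1-q}}$, which is integrable by hypothesis. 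The other factor raised to $q$ gives exactly $b(1-s)u(s)^{p(s)}$. Hence
\begin{equation}
\int_0^1 u^{\frac{p(s)}{q}}\ ds\le \left(\left(b^{\frac{1}{1-q}}*\bm{1}\right)(1)\right)^{\frac{q-1}{q}}\left(\left(b*u^{p(\cdot)}\right)(1)\right)^{\frac1q}<\rho^{\frac1q}\left(\left(b^{\frac{1}{1-q}}*\bm{1}\right)(1)\right)^{\frac{q-1}{q}}.\notag
\end{equation}
The a.e. positivity of $b$, needed for $b^{-1/q}$ to make sense, is implicit in the hypothesis $b^{\frac{1}{1-q}}\in L^1$.

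Next we invoke Corollary \ref{corollary2.2} with $f=u$ (continuous and nonnegative, with $1<q<p^-$). It gives $2^{1-\frac{p^+}{q}}\int_0^1 u^{\frac{p^-}{q}}\,ds-1\le\int_0^1u^{\frac{p(s)}{q}}\,ds$. Combining this with the display above and rearranging yields
\begin{equation}
\int_0^1 u^{\frac{p^-}{q}}\ ds< 2^{\frac{p^+}{q}-1}\left[\rho^{\frac1q}\left(\left(b^{\frac{1}{1-q}}*\bm{1}\right)(1)\right)^{\frac{q-1}{q}}+1\right].\notag
\end{equation}

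Finally we pass from this integral to $\Vert u\Vert_{\infty}$. Since $\frac{p^-}{q}>1$, Jensen's (or H\"older's) inequality on $[0,1]$ gives $(\bm{1}*u)(1)\le\left(\int_0^1u^{p^-/q}\right)^{q/p^-}$. The cone condition then gives $C_0\Vert u\Vert_\infty\le\left(\int_0^1u^{p^-/q}\right)^{q/p^-}$. Raising the previous bound to the power $\frac{q}{p^-}$ produces the constant $2^{\left(\frac{p^+}{q}-1\right)\frac{q}{p^-}}=2^{\frac{p^+-q}{p^-}}$, which matches the stated formula, and the inequality remains strict.

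For $u\in\partial\widehat{V}_{\rho}$ the same chain applies with $\left(b*u^{p(\cdot)}\right)(1)=\rho$, so the inequality is non-strict. The main point to verify carefully is the H\"older split: it must land exactly on the exponent $\frac{1}{1-q}$ and the power $\frac{q-1}{q}$. The remaining steps are bookkeeping of the exponents.
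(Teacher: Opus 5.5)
Your proof is correct and follows the same route as the paper: your H\"older split with exponents $q$ and $\frac{q}{q-1}$ is the paper's ``reverse H\"older'' step, and it is followed, as in the paper, by Lemma \ref{lemma2.1}/Corollary \ref{corollary2.2}, Jensen's inequality, and the coercivity $(\bm{1}*u)(1)\ge C_0\Vert u\Vert_{\infty}$. Your ordering has one small advantage: you take the $q$-th root before subtracting $1$, so you never raise the possibly negative bracket $-1+2^{1-\frac{p^+}{q}}\int_0^1u^{\frac{p^-}{q}}\,ds$ to the power $q$, a point the paper passes over without comment.
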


\begin{proof}
Let $\rho>0$ be fixed but arbitrary and suppose that $u\in\widehat{V}_{\rho}$.  Then
\begin{equation}\label{eq2.11}
\left(b*u^{p(\cdot)}\right)(1)<\rho.
\end{equation}
Now, both by Lemma \ref{lemma2.1} and the reverse H\"{o}lder inequality we see that
\begin{equation}\label{eq2.12}
\begin{split}
\left(b*u^{p(\cdot)}\right)(1)&=\int_0^1b(1-s)\big(u(s)\big)^{p(s)}\ ds\\
&\ge\left(\int_0^1\big(b(1-s)\big)^{\frac{1}{1-q}}\ ds\right)^{1-q}\left(\int_0^1\big(u(s)\big)^{\frac{p(s)}{q}}\ ds\right)^{q}\\
&\ge\left(\int_0^1\big(b(1-s)\big)^{\frac{1}{1-q}}\ ds\right)^{1-q}\left[\int_0^1\left(2^{1-\frac{p^+}{q}}\left(u(s)\right)^{\frac{p^-}{q}}-1\right)\ ds\right]^q\\
&=\left(\left(b^{\frac{1}{1-q}}*\bm{1}\right)(1)\right)^{1-q}\left[-1+2^{1-\frac{p^+}{q}}\int_0^1\big(u(s)\big)^{\frac{p^-}{q}}\ ds\right]^q.
\end{split}
\end{equation}
Since
\begin{equation}
\int_0^1\big(u(s)\big)^{\frac{p^-}{q}}\ ds\ge\left(\int_0^1u(s)\ ds\right)^{\frac{p^-}{q}}\notag
\end{equation}
by means of Jensen's inequality, it follows from \eqref{eq2.12} that
\begin{equation}\label{eq2.13}
\begin{split}
\left(b*u^{p(\cdot)}\right)(1)&\ge\left(\left(b^{\frac{1}{1-q}}*\bm{1}\right)(1)\right)^{1-q}\left[-1+2^{1-\frac{p^+}{q}}\int_0^1\big(u(s)\big)^{\frac{p^-}{q}}\ ds\right]^q\\
&\ge\left(\left(b^{\frac{1}{1-q}}*\bm{1}\right)(1)\right)^{1-q}\left[-1+2^{1-\frac{p^+}{q}}\left(\int_0^1u(s)\ ds\right)^{\frac{p^-}{q}}\right]^q\\
&\ge\left(\left(b^{\frac{1}{1-q}}*\bm{1}\right)(1)\right)^{1-q}\left[-1+2^{1-\frac{p^+}{q}}C_0^{\frac{p^-}{q}}\Vert u\Vert_{\infty}^{\frac{p^-}{q}}\right]^q,
\end{split}
\end{equation}
where we have used the coercivity relation
\begin{equation}
\int_0^1u(s)\ ds=(u*\bm{1})(1)\ge C_0\Vert u\Vert_{\infty},\notag
\end{equation}
seeing as $u\in\mathscr{K}$.  Therefore, combining both \eqref{eq2.11} and \eqref{eq2.13} we arrive at
\begin{equation}
\rho>\left(\left(b^{\frac{1}{1-q}}*\bm{1}\right)(1)\right)^{1-q}\left[-1+2^{1-\frac{p^+}{q}}C_0^{\frac{p^-}{q}}\Vert u\Vert_{\infty}^{\frac{p^-}{q}}\right]^q,\notag
\end{equation}
from which it follows that
\begin{equation}
\Vert u\Vert_{\infty}^{\frac{p^-}{q}}<2^{\frac{p^+}{q}-1}C_0^{-\frac{p^-}{q}}\left[\rho^{\frac{1}{q}}\left(\left(b^{\frac{1}{1-q}}*\bm{1}\right)(1)\right)^{\frac{q-1}{q}}+1\right]\notag
\end{equation}
so that
\begin{equation}\label{eq2.14}
\Vert u\Vert_{\infty}<2^{\frac{p^+-q}{p^-}}C_0^{-1}\left[\rho^{\frac{1}{q}}\left(\left(b^{\frac{1}{1-q}}*\bm{1}\right)(1)\right)^{\frac{q-1}{q}}+1\right]^{\frac{q}{p^-}},
\end{equation}
as claimed.

On the other hand, if $u\in\partial\widehat{V}_{\rho}$, then an inspection of the preceding argument demonstrates that the only change under the new assumption is that the strict inequality deduced above in \eqref{eq2.14} changes to a non-strict inequality.  And this completes the proof.
\end{proof}

As a corollary to Lemma \ref{lemma2.3}, we obtain the following result.

\begin{corollary}\label{corollary2.4}
For each $\rho>0$ the set $\widehat{V}_{\rho}$ is bounded.
\end{corollary}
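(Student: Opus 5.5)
The plan is to read the corollary off Lemma \ref{lemma2.3} directly. Boundedness of $\widehat{V}_{\rho}$ in $\mathscr{C}\big([0,1]\big)$ only requires a finite constant $M_\rho$ with $\Vert u\Vert_{\infty}\le M_\rho$ for every $u\in\widehat{V}_{\rho}$. Lemma \ref{lemma2.3} supplies exactly such a constant, namely
\begin{equation}
M_\rho:=C_0^{-1}2^{\frac{p^{+}-q}{p^-}}\left[\rho^{\frac{1}{q}}\left(\left(b^{\frac{1}{1-q}}*\bm{1}\right)(1)\right)^{\frac{q-1}{q}}+1\right]^{\frac{q}{p^-}},\notag
\end{equation}
and this constant does not depend on $u$. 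So the steps are: fix $\rho>0$, invoke Lemma \ref{lemma2.3} for arbitrary $u\in\widehat{V}_{\rho}$, and conclude $\widehat{V}_{\rho}\subseteq\{u:\Vert u\Vert_\infty<M_\rho\}$. The same argument applied to the second half of Lemma \ref{lemma2.3} also bounds $\overline{\widehat{V}_{\rho}}=\widehat{V}_{\rho}\cup\partial\widehat{V}_{\rho}$.

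The one step needing care is checking that $M_\rho$ is genuinely finite. This uses $C_0>0$ from (H3)(2) and $(b^{\frac{1}{1-q}}*\bm{1})(1)<+\infty$ from the integrability hypothesis of Lemma \ref{lemma2.3}. That hypothesis is implicitly a standing assumption of the corollary, for some $q\in(1,p^-)$, which exists by (H2).

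If one wants the corollary without the extra integrability assumption on $b$, there is a fallback. Use Corollary \ref{corollary2.2} with $q=1$ on a subinterval where $b(1-\cdot)$ is bounded below, which is possible since $b$ is a.e.\ positive. Then use the cone property $\min_{[\alpha,\beta]}u\ge\eta_0\Vert u\Vert_\infty$ to bound $\Vert u\Vert_\infty$ in terms of $\rho$. I expect this direct route to be unnecessary, and the main obstacle is merely bookkeeping of the hypotheses.
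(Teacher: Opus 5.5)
Your proposal is correct and matches the paper: Corollary \ref{corollary2.4} is stated as an immediate consequence of Lemma \ref{lemma2.3} (compare Corollary \ref{corollary3.4}, which the paper calls ``immediate from Lemma \ref{lemma3.3}''), because the bound there does not depend on $u$ and is finite since $C_0>0$ and $b^{\frac{1}{1-q}}\in L^1\big((0,1]\big)$. You are also right that this integrability hypothesis must be tacitly in force (it forces $b>0$ a.e., and without a condition of this kind $\widehat{V}_{\rho}$ can genuinely be unbounded), and your fallback only needs $\int_{\alpha}^{\beta}b(1-t)\,dt>0$: a subinterval on which $b(1-\cdot)$ is bounded below need not exist, but a subset of $[\alpha,\beta]$ of positive measure where it is bounded below suffices.
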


Our next lemma asserts that $T$ maps a solid annular subset of $\mathscr{K}$ back into $\mathscr{K}$, which is important for the topological fixed point result contained in Lemma \ref{lemma2.4aaa}.

\begin{lemma}\label{lemma2.6}
Assume that conditions (H1)--(H3) are true.  Then the operator $T$ is completely continuous on $\overline{\widehat{V}_{\rho_2}}\setminus\widehat{V}_{\rho_1}$.  Moreover,
\begin{equation}
T\left(\overline{\widehat{V}_{\rho_2}}\setminus\widehat{V}_{\rho_1}\right)\subseteq\mathscr{K}.\notag
\end{equation}
\end{lemma}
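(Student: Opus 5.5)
Write $\mathscr{A}:=\overline{\widehat{V}_{\rho_2}}\setminus\widehat{V}_{\rho_1}$. The plan has two parts: first show that $A\big((b*u^{p(\cdot)})(1)\big)$ is bounded below by a positive constant on $\mathscr{A}$, and then run the standard Arzel\`a--Ascoli / Green's function argument together with the usual cone-invariance computation.

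\textbf{Step 1: well-definedness and uniform bounds.} For $u\in\mathscr{A}$ we have $\rho_1\le (b*u^{p(\cdot)})(1)\le\rho_2$. This uses that the functional $u\mapsto (b*u^{p(\cdot)})(1)$ is continuous on $\mathscr{C}([0,1])$, so that the closure of $\widehat{V}_{\rho_2}$ lies in the set where the functional is at most $\rho_2$. Continuity follows from dominated convergence: $b\in L^1$, and $u_n\to u$ uniformly with $u_n\ge 0$ bounded gives $u_n^{p(s)}\to u^{p(s)}$ uniformly. Since $A$ is continuous and positive on $[\rho_1,\rho_2]$ by (H1)(3), we get $A_0:=\min_{[\rho_1,\rho_2]}A>0$, so $T$ is well defined on $\mathscr{A}$. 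By Corollary \ref{corollary2.4} (or directly Lemma \ref{lemma2.3}), $\mathscr{A}$ is bounded, say $\Vert u\Vert_\infty\le R$. Hence $0\le f(s,u(s))\le f^M_{[0,1]\times[0,R]}$ for all $u\in\mathscr{A}$.

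\textbf{Step 2: complete continuity.} For any bounded $S\subseteq\mathscr{A}$, the image $TS$ is uniformly bounded by $\lambda A_0^{-1}f^M\max G$. It is equicontinuous because $G$ is uniformly continuous on $[0,1]^2$, which gives
\[
|(Tu)(t_1)-(Tu)(t_2)|\le\lambda A_0^{-1}f^M\max_s|G(t_1,s)-G(t_2,s)|.
\]
Arzel\`a--Ascoli then gives relative compactness. For continuity, let $u_n\to u$ in $\mathscr{A}$. Then $A\big((b*u_n^{p(\cdot)})(1)\big)^{-1}\to A\big((b*u^{p(\cdot)})(1)\big)^{-1}$ by Step 1 and the continuity of $A$ together with the positive lower bound $A_0$. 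Also $f(s,u_n(s))\to f(s,u(s))$ uniformly, by uniform continuity of $f$ on $[0,1]\times[0,R]$. Hence $Tu_n\to Tu$ uniformly.

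\textbf{Step 3: $T\mathscr{A}\subseteq\mathscr{K}$.} Positivity is immediate from $A>0$ on the relevant range, $G\ge0$, and $f\ge0$. Set $c_u:=\lambda A(\cdot)^{-1}>0$. Then $\Vert Tu\Vert_\infty\le c_u\int_0^1\mathscr{G}(s)f(s,u(s))\,ds$. On the other hand, by (H3)(1),
\[
\min_{[\alpha,\beta]}Tu\ge c_u\eta_0\int_0^1\mathscr{G}(s)f(s,u(s))\,ds\ge\eta_0\Vert Tu\Vert_\infty .
\]
For the integral condition, Fubini gives
\[
(\bm{1}*Tu)(1)=c_u\int_0^1\Big(\int_0^1G(t,s)\,dt\Big)f(s,u(s))\,ds .
\]
Here (H3)(2) needs care, because it is stated with a $\sup$, whereas what is needed is $\int_0^1G(t,s)\,dt\ge C_0\mathscr{G}(s)$ for each $s$. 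The obstacle is therefore precisely the interpretation of $C_0$. I would read (H3)(2) as intended with the infimum, which is the version established for the standard Green's functions \eqref{eq2.1mmm}--\eqref{eq2.2ppp} in \cite{erbe1,goodrich5}. Failing that, I would use $\int_0^1G(t,s)\,dt\ge\int_\alpha^\beta G(t,s)\,dt\ge(\beta-\alpha)\eta_0\mathscr{G}(s)$, which yields the coercivity inequality with constant $(\beta-\alpha)\eta_0$. In either case,
\[
(\bm{1}*Tu)(1)\ge C_0\,c_u\int_0^1\mathscr{G}(s)f(s,u(s))\,ds\ge C_0\Vert Tu\Vert_\infty ,
\]
which completes the verification that $Tu\in\mathscr{K}$.
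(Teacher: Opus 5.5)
Your argument is correct once $C_0$ is read as an infimum. It is the standard Arzel\`a--Ascoli plus cone-invariance argument that the paper defers to: the paper omits the proof and cites \cite[Lemma 2.11]{goodrich22}. Two points need correcting.

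\textbf{The infimum reading is forced, and your fallback does not work.} Take the literal supremum with the right-focal kernel \eqref{eq2.2ppp}. Then $C_0=\sup_{s\in(0,1)}\bigl(1-\tfrac{s}{2}\bigr)=1$, so the condition $(\bm{1}*u)(1)\ge\Vert u\Vert_{\infty}$ makes every element of $\mathscr{K}$ constant. But $(Tu)(0)=0$ and $Tu\not\equiv0$ when $f>0$, so the lemma as literally written is false. For the same reason your fallback does not prove the stated inclusion. The bound $\int_0^1G(t,s)\,dt\ge(\beta-\alpha)\eta_0\mathscr{G}(s)$ shows only that $T$ maps into the cone with $C_0$ replaced by $(\beta-\alpha)\eta_0$. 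That is a different cone, and it would also change the constant in Lemma \ref{lemma2.3}. So your sentence ``in either case $\ldots\ge C_0\Vert Tu\Vert_{\infty}$'' holds only in the first case.

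\textbf{Boundedness of $\mathscr{A}$.} Corollary \ref{corollary2.4} depends on the extra hypothesis $b^{\frac{1}{1-q}}\in L^1$ of Lemma \ref{lemma2.3}, which is not part of (H1)--(H3). Without it, $\widehat{V}_{\rho}$ can be unbounded. For example, take the Dirichlet kernel, $[\alpha,\beta]=[\frac14,\frac34]$, and $b$ supported in $(0,\frac18)$. Then $(b*u^{p(\cdot)})(1)=0$ for every $u\in\mathscr{K}$ vanishing on $[\frac78,1]$, and $\mathscr{K}$ contains such $u$ of arbitrarily large norm. You do not need global boundedness, though. Complete continuity only concerns bounded sets and convergent sequences. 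So bound $f$ on $[0,1]\times[0,R_S]$, where $R_S$ bounds the set or sequence at hand, and your Steps 2--3 go through unchanged.
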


\begin{proof}
Omitted -- see, for example, \cite[Lemma 2.11]{goodrich22}.
\end{proof}

Finally, we state the topological fixed point theorem, which we use in the proof of our existence result -- see Cianciaruso, Infante, and Pietramala \cite[Lemma 2.1]{cianciaruso1}.

\begin{lemma}\label{lemma2.4aaa}
Let $U$ be a bounded open set and, with $\mathscr{K}$ a cone in a real Banach space $\mathscr{X}$, suppose both that $U_{\mathscr{K}}:=U\cap\mathscr{K}\supseteq\{\bm{0}\}$ and that $\overline{U_{\mathscr{K}}}\neq\mathscr{K}$.  Assume that $T\ : \ \overline{U_{\mathscr{K}}}\rightarrow\mathscr{K}$ is a compact map such that $x\neq Tx$ for each $x\in\partial U_{\mathscr{K}}$.  Then the fixed point index $i_{\mathscr{K}}\left(T,U_{\mathscr{K}}\right)$ has the following properties.
\begin{enumerate}
\item If there exists $v\in\mathscr{K}\setminus\{\bm{0}\}$ such that $x\neq Tx+\lambda v$ for each $x\in\partial U_{\mathscr{K}}$ and each $\lambda>0$, then $i_{\mathscr{K}}\left(T,U_{\mathscr{K}}\right)=0$.

\item If $\mu x\neq Tx$ for each $x\in\partial U_{\mathscr{K}}$ and for each $\mu\ge1$, then $i_{\mathscr{K}}\left(T,U_{\mathscr{K}}\right)=1$.

\item If $i_{\mathscr{K}}\left(T,U_{\mathscr{K}}\right)\neq0$, then $T$ has a fixed point in $U_{\mathscr{K}}$.

\item Let $U^1$ be open in $X$ with $\overline{U_{\mathscr{K}}^1}\subseteq U_{\mathscr{K}}$.  If $i_{\mathscr{K}}\left(T,U_{\mathscr{K}}\right)=1$ and $i_{\mathscr{K}}\left(T,U_{\mathscr{K}}^{1}\right)=0$, then $T$ has a fixed point in $U_{\mathscr{K}}\setminus\overline{U_{\mathscr{K}}^{1}}$.  The same result holds if $i_{\mathscr{K}}\left(T,U_{\mathscr{K}}\right)=0$ and $i_{\mathscr{K}}\left(T,U_{\mathscr{K}}^{1}\right)=1$.
\end{enumerate}
\end{lemma}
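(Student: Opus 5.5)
The statement is the fixed point index lemma of Cianciaruso, Infante, and Pietramala. I would prove it by reducing it to the standard properties of the Granas–Krasnosel'skii fixed point index on retracts of Banach spaces. By Dugundji's extension theorem the cone $\mathscr{K}$ is a retract of $\mathscr{X}$. Hence the index $i_{\mathscr{K}}(T,U_{\mathscr{K}})$ is well defined for any compact $T\colon\overline{U_{\mathscr{K}}}\rightarrow\mathscr{K}$ that has no fixed points on $\partial U_{\mathscr{K}}$. The tools I would use are the axioms of this index: normalization, additivity/excision, homotopy invariance, and the solution property.

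Item (3) is the solution property. If $T$ had no fixed point in $U_{\mathscr{K}}$, excision would let us shrink the domain to the empty set, so the index would be $0$. For item (2) I would use the homotopy $H(t,x):=tTx$ for $t\in[0,1]$. It is admissible, since $x=tTx$ with $x\in\partial U_{\mathscr{K}}$ would give $\mu x=Tx$ with $\mu=1/t\ge1$; for $t=0$ it would give $x=\bm{0}$, which is excluded because $\bm{0}\in U_{\mathscr{K}}$ lies in the relative interior. Homotopy invariance then gives $i_{\mathscr{K}}(T,U_{\mathscr{K}})=i_{\mathscr{K}}(\bm{0},U_{\mathscr{K}})=1$ by normalization, since $\bm{0}\in U_{\mathscr{K}}$.

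For item (1) I would use the homotopy $H(t,x):=Tx+tRv$ for $t\in[0,1]$, with $R>0$ large. By hypothesis, $x\neq Tx+\lambda v$ on $\partial U_{\mathscr{K}}$ for every $\lambda\ge0$ (the case $\lambda=0$ is the standing assumption). So $H$ is admissible, and $i_{\mathscr{K}}(T,U_{\mathscr{K}})=i_{\mathscr{K}}(T+Rv,U_{\mathscr{K}})$. The main obstacle is to show that $x=Tx+Rv$ has no solution in $\overline{U_{\mathscr{K}}}$ once $R$ is large; the solution property then makes the latter index $0$. Since $U$ is bounded and $T$ is compact, $T(\overline{U_{\mathscr{K}}})$ is bounded. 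If $x=Tx+Rv$, then
\[
R\Vert v\Vert\le\Vert x\Vert+\Vert Tx\Vert,
\]
and the right-hand side is bounded independently of $R$. Since $v\neq\bm{0}$, this is impossible for $R$ large. I would note that the hypothesis $\overline{U_{\mathscr{K}}}\neq\mathscr{K}$ is what places us in the nondegenerate setting where this argument is meaningful; boundedness already makes it automatic here.

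Item (4) follows from additivity. The hypotheses say $T$ has no fixed points on $\partial U_{\mathscr{K}}$. If it also has none on $\partial U^1_{\mathscr{K}}$, additivity gives
\[
i_{\mathscr{K}}\bigl(T,U_{\mathscr{K}}\setminus\overline{U^1_{\mathscr{K}}}\bigr)=i_{\mathscr{K}}(T,U_{\mathscr{K}})-i_{\mathscr{K}}(T,U^1_{\mathscr{K}})=\pm1\neq0.
\]
Item (3) then yields a fixed point in the annulus $U_{\mathscr{K}}\setminus\overline{U^1_{\mathscr{K}}}$. The reversed case, with indices $0$ and $1$, is identical. If instead $T$ does have a fixed point on $\partial U^1_{\mathscr{K}}$, that point already lies in $U_{\mathscr{K}}\setminus U^1_{\mathscr{K}}$. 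The statement is then understood, as usual, to have the implicit hypothesis that both indices are defined, i.e.\ no fixed points on either boundary.
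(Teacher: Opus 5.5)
Your proposal is correct. The paper gives no proof of this lemma and only cites Cianciaruso, Infante, and Pietramala; your derivation from the normalization, homotopy invariance, additivity/excision, and solution properties of the index on the retract $\mathscr{K}$ is the standard argument behind that citation. One small slip is in your aside on (4): a fixed point on $\partial U^1_{\mathscr{K}}$ lies in $\overline{U^1_{\mathscr{K}}}$, not in $U_{\mathscr{K}}\setminus\overline{U^1_{\mathscr{K}}}$. This does not matter, because, as you note, the hypothesis $i_{\mathscr{K}}(T,U^1_{\mathscr{K}})=0$ already presupposes that the index is defined, so that case cannot occur.
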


With the necessary preliminary results stated, we now prove an existence theorem for \eqref{eq1.1} subject to the boundary data implied by the Green's function $G$.  In both the statement and proof of Theorem \ref{theorem2.8} it will useful to use the following notation, where $\rho>0$ is a real number.
\begin{equation}
\begin{split}
G^M&:=\max_{\tau\in[0,1]}\int_0^1G(\tau,s)\ ds\\
m_{\rho}&:=\left(\frac{\rho}{(b*\bm{1})(1)}\right)^{\frac{1}{p^+}}+\varepsilon(\rho,b)\\
M_{\rho}&:=C_0^{-1}2^{\frac{p^{+}-q}{p^-}}\left[\rho^{\frac{1}{q}}\left(\left(b^{\frac{1}{1-q}}*\bm{1}\right)(1)\right)^{\frac{q-1}{q}}+1\right]^{\frac{q}{p^-}}\notag
\end{split}
\end{equation}
Note that the function $(\rho,b)\mapsto\varepsilon(\rho,b)$ appearing in $m_{\rho}$ is the same function as was defined in Lemma \ref{lemma2.3aaa} earlier.

\begin{theorem}\label{theorem2.8}
Suppose for some $q\in\big(1,p^-\big)$ that $b^{\frac{1}{1-q}}\in L^1\big((0,1]\big)$.  In addition, suppose that the numbers $\rho_1$ and $\rho_2$ from condition (H1.3) are such that each of the following inequalities holds.
\vskip0.2cm
\begin{enumerate}
\item $\displaystyle\left[2^{1-p^+}\left(\frac{\lambda f_{[\alpha,\beta]\times\left[\eta_0m_{\rho_1},M_{\rho_1}\right]}^{m}}{A\left(\rho_1\right)}\max_{\tau\in[0,1]}\int_{\alpha}^{\beta}G(\tau,s)\ ds\right)^{p^-}-1\right]\int_{\alpha}^{\beta}b(1-t)\ dt>\frac{\rho_1}{\eta_0^{p^+}}$
\vskip0.2cm
\item $\displaystyle\max\left\{\left(\frac{\lambda f_{[0,1]\times\left[0,M_{\rho_2}\right]}^{M}G^M}{A\left(\rho_2\right)}\right)^{p^-},\left(\frac{\lambda f_{[0,1]\times\left[0,M_{\rho_2}\right]}^{M}G^M}{A\left(\rho_2\right)}\right)^{p^+}\right\}\int_0^1b(1-t)\ dt<\rho_2$
\end{enumerate}
\vskip0.2cm
If each of conditions (H1)--(H3) holds, then problem \eqref{eq1.1}, subject to the boundary data encoded by the Green's function $G$, has at least one positive solution $u_0\in\widehat{V}_{\rho_2}\setminus\overline{\widehat{V}_{\rho_1}}$ satisfying the localisation
\begin{equation}
\left(\frac{\rho_1}{(b*\bm{1})(1)}\right)^{\frac{1}{p^+}}+\varepsilon\left(\rho_1,b\right)\le\Vert u\Vert_{\infty}\le C_0^{-1}2^{\frac{p^+-q}{p^-}}\left[\rho_2^{\frac{1}{q}}\left(\left(b^{\frac{1}{1-q}}*\bm{1}\right)(1)\right)^{\frac{q-1}{q}}+1\right]^{\frac{q}{p^-}}.\notag
\end{equation}
\end{theorem}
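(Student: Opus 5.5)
The plan is to apply Lemma \ref{lemma2.4aaa}(4) with $U_{\mathscr{K}}=\widehat{V}_{\rho_2}$ and $U^1_{\mathscr{K}}=\widehat{V}_{\rho_1}$. Both sets are bounded by Corollary \ref{corollary2.4}. They contain $\bm{0}$, and $\overline{\widehat{V}_{\rho_1}}\subseteq\widehat{V}_{\rho_2}$ since $\rho_1<\rho_2$. By Lemma \ref{lemma2.6}, $T$ is completely continuous on $\overline{\widehat{V}_{\rho_2}}\setminus\widehat{V}_{\rho_1}$ and maps it into $\mathscr{K}$. Strictly, the index must be computed on sets where $T$ is defined; as is standard, I would either extend $T$ to $\overline{\widehat{V}_{\rho_2}}$ (e.g.\ by replacing $A$ by a positive continuous function agreeing with $A$ on $[\rho_1,\rho_2]$), or use the annular version of the index lemma. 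In either case the boundary behaviour on $\partial\widehat{V}_{\rho_1}$ and $\partial\widehat{V}_{\rho_2}$ is what matters. I would show that the index is $1$ on $\widehat{V}_{\rho_2}$ and $0$ on $\widehat{V}_{\rho_1}$, which yields a fixed point $u_0\in\widehat{V}_{\rho_2}\setminus\overline{\widehat{V}_{\rho_1}}$. The localisation then follows because $u_0\notin\widehat{V}_{\rho_1}$ gives $(b*u_0^{p(\cdot)})(1)\ge\rho_1$. The lower bound of Lemma \ref{lemma2.3aaa} holds for any such $u$: its proof only used $\rho\le(b*u^{p(\cdot)})(1)$, and $m_\rho$ is nondecreasing in $\rho$. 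The upper bound comes from Lemma \ref{lemma2.3} applied with $\rho_2$.

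\textbf{Index $1$ on $\widehat{V}_{\rho_2}$.} Suppose $\mu u=Tu$ for some $u\in\partial\widehat{V}_{\rho_2}$ and $\mu\ge1$. Lemma \ref{lemma2.3} gives $0\le u\le M_{\rho_2}$, so for each $t$,
\[
u(t)\le\mu u(t)=(Tu)(t)\le\frac{\lambda f^M_{[0,1]\times[0,M_{\rho_2}]}G^M}{A(\rho_2)}=:K.
\]
Hence $u^{p(s)}\le K^{p(s)}\le\max\{K^{p^-},K^{p^+}\}$, splitting according to whether $K\le1$ or $K>1$. It follows that
\[
\rho_2=(b*u^{p(\cdot)})(1)\le\max\{K^{p^-},K^{p^+}\}\int_0^1b(1-t)\,dt<\rho_2
\]
by condition (2), a contradiction. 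Lemma \ref{lemma2.4aaa}(2) then gives index $1$.

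\textbf{Index $0$ on $\widehat{V}_{\rho_1}$.} Take $v=\bm{1}$, which lies in $\mathscr{K}$ provided $C_0\le1$ and $\eta_0\le1$. If this is doubtful, choose instead any nonzero $v\in\mathscr{K}$, for example $v=T$ applied to some element. Suppose $u=Tu+\lambda' v$ for some $u\in\partial\widehat{V}_{\rho_1}$ and $\lambda'>0$. For $s\in[\alpha,\beta]$ the cone gives $u(s)\ge\eta_0\|u\|_\infty\ge\eta_0m_{\rho_1}$ by Lemma \ref{lemma2.3aaa}, and Lemma \ref{lemma2.3} gives $u(s)\le M_{\rho_1}$. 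Therefore $f(s,u(s))\ge f^m_{[\alpha,\beta]\times[\eta_0m_{\rho_1},M_{\rho_1}]}$ on $[\alpha,\beta]$. This yields, for every $t$,
\[
u(t)\ge\frac{\lambda f^m}{A(\rho_1)}\int_\alpha^\beta G(t,s)\,ds,
\]
so $\|u\|_\infty\ge\frac{\lambda f^m}{A(\rho_1)}\max_\tau\int_\alpha^\beta G(\tau,s)\,ds=:L$. On $[\alpha,\beta]$ we then have $u(t)\ge\eta_0 L$.

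Next I would invoke Lemma \ref{lemma2.1} with $q=1$ and $f$ replaced by $u/\eta_0$. Since $\eta_0\le1$, we have $u^{p(t)}\ge\eta_0^{p^+}(u/\eta_0)^{p(t)}$, and Lemma \ref{lemma2.1} gives
\[
u^{p(t)}\ge\eta_0^{p^+}\big(2^{1-p^+}L^{p^-}-1\big)\quad\text{on }[\alpha,\beta].
\]
Integrating against $b(1-t)$ over $[\alpha,\beta]$ gives
\[
\rho_1=(b*u^{p(\cdot)})(1)\ge\eta_0^{p^+}\big[2^{1-p^+}L^{p^-}-1\big]\int_\alpha^\beta b(1-t)\,dt>\rho_1
\]
by condition (1), a contradiction. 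Lemma \ref{lemma2.4aaa}(1) then gives index $0$.

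\textbf{Main obstacle.} The delicate step is this last estimate, namely matching exactly the constants $2^{1-p^+}$ and $\eta_0^{p^+}$ in condition (1). That requires applying Lemma \ref{lemma2.1} to the normalized function $u/\eta_0$ rather than to $u$, and checking that the bracket in condition (1) is positive, which the strict inequality forces. Everything else follows from the preliminary lemmas.
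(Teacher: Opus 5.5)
Your proposal is correct and follows essentially the same route as the paper: index $0$ on $\widehat{V}_{\rho_1}$ via Lemma \ref{lemma2.4aaa}(1) with $v=\bm{1}$ and Lemma \ref{lemma2.1} with $q=1$, then index $1$ on $\widehat{V}_{\rho_2}$ via Lemma \ref{lemma2.4aaa}(2) with a $p^-$/$p^+$ case split, and finally part (4). The differences are cosmetic: you apply the cone's Harnack bound to $u$ rather than to $Tu$, and you split on $K$ rather than on $\Vert Tu\Vert_{\infty}$. You also make explicit two points the paper passes over: extending $T$ off the annulus so that the index on $\widehat{V}_{\rho_2}$ is defined, and why the lower localisation bound holds for $u_0$ in the open annulus and not only on $\partial\widehat{V}_{\rho_1}$.
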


\begin{proof}
Let us begin by noting that $\bm{0}\in\widehat{V}_{\rho_2}$ since $\displaystyle\left(a*\bm{0}^{p(\cdot)}\right)(1)=0$; this is necessary for the proper application of Lemma \ref{lemma2.4aaa}.  We will first appeal to part (1) of Lemma \ref{lemma2.4aaa}, and so, we aim to demonstrate that whenever $u\in\partial\widehat{V}_{\rho_1}$ it follows that $u\not\equiv Tu+\mu\bm{1}$ for all $\mu>0$; note that $\bm{1}\in\mathscr{K}$, owing to the fact that $\eta_0$, $C_0\in(0,1]$.  So, for contradiction suppose that there exists $u\in\partial\widehat{V}_{\rho_1}$ and $\mu>0$ such that $u(t)=(Tu)(t)+\mu$ for each $t\in[0,1]$.  Then it follows that
\begin{equation}
\big(u(t)\big)^{p(t)}=\big((Tu)(t)+\mu\big)^{p(t)}\notag
\end{equation}
so that
\begin{equation}\label{eq2.15}
\begin{split}
\rho_1=\left(b*u^{p(\cdot)}\right)(1)&\ge\left(b*(Tu)^{p(\cdot)}\right)(1)\\
&\ge\int_{\alpha}^{\beta}b(1-t)\big(\eta_0\Vert Tu\Vert_{\infty}\big)^{p(t)}\ dt\\
&\ge\eta_0^{p^+}\int_{\alpha}^{\beta}b(1-t)\Vert Tu\Vert_{\infty}^{p(t)}\ dt\\
&\ge\eta_0^{p^+}\int_{\alpha}^{\beta}b(1-t)\left(2^{1-p^+}\Vert Tu\Vert_{\infty}^{p^-}-1\right)\ dt\\
&=\eta_0^{p^+}\left[2^{1-p^+}\Vert Tu\Vert_{\infty}^{p^-}-1\right]\int_{\alpha}^{\beta}b(1-t)\ dt,
\end{split}
\end{equation}
where we have used Lemma \ref{lemma2.1} (with $q=1$ in the statement of the lemma) to obtain the final inequality.  Observe, furthermore, that
\begin{equation}\label{eq2.16}
\begin{split}
\Vert Tu\Vert_{\infty}^{p^-}&=\left(\max_{\tau\in[0,1]}\lambda\int_0^1\big(A\left(\rho_1\right)\big)^{-1}G(\tau,s)f\big(s,u(s)\big)\ ds\right)^{p^-}\\
&=\left(\frac{\lambda}{A\left(\rho_1\right)}\max_{\tau\in[0,1]}\int_0^1G(\tau,s)f\big(s,u(s)\big)\ ds\right)^{p^-}\\
&\ge\left(\frac{\lambda}{A\left(\rho_1\right)}\max_{\tau\in[0,1]}\int_{\alpha}^{\beta}G(\tau,s)f\big(s,u(s)\big)\ ds\right)^{p^-}\\
&\ge\left(\frac{\lambda f_{[\alpha,\beta]\times\left[\eta_0m_{\rho_1},M_{\rho_1}\right]}^{m}}{A\left(\rho_1\right)}\max_{\tau\in[0,1]}\int_{\alpha}^{\beta}G(\tau,s)\ ds\right)^{p^-},
\end{split}
\end{equation}
whereupon using inequality \eqref{eq2.16} in inequality \eqref{eq2.15} we see that
\begin{equation}\label{eq2.17}
\begin{split}
\rho_1&\ge\eta_0^{p^+}\left[2^{1-p^+}\left(\frac{\lambda f_{[\alpha,\beta]\times\left[\eta_0m_{\rho_1},M_{\rho_1}\right]}^{m}}{A\left(\rho_1\right)}\max_{\tau\in[0,1]}\int_{\alpha}^{\beta}G(\tau,s)\ ds\right)^{p^-}-1\right]\int_{\alpha}^{\beta}b(1-t)\ dt.
\end{split}
\end{equation}
Note that in \eqref{eq2.16} we have used, by means of Lemmata \ref{lemma2.3aaa}--\ref{lemma2.3}, that
\begin{equation}
M_{\rho_1}\ge\Vert u\Vert_{\infty}\ge u(t)\ge\eta_0\Vert u\Vert_{\infty}\ge\eta_0m_{\rho_1},\notag
\end{equation}
for each $t\in[\alpha,\beta]$.  But then, in light of condition (1) in the statement of the theorem, inequality \eqref{eq2.17} implies a contradiction.  Therefore, from part (1) of Lemma \ref{lemma2.4aaa} we conclude that
\begin{equation}\label{eq2.18}
i_{\mathscr{K}}\left(T,\widehat{V}_{\rho_1}\right)=0.
\end{equation}

We next demonstrate that whenever $u\in\partial\widehat{V}_{\rho_2}$ and $\mu\ge1$ it follows that $Tu\not\equiv\mu u$, being as we aim to use part (2) of Lemma \ref{lemma2.4aaa}.  So, for contradiction suppose that there exists $u\in\partial\widehat{V}_{\rho_2}$ and $\mu\ge1$ such that $(Tu)(t)=\mu u(t)$ for each $t\in[0,1]$.  Then for each $0\le t\le 1$ it follows that
\begin{equation}\label{eq2.19bbb}
\big(\mu u(t)\big)^{p(t)}=\big((Tu)(t)\big)^{p(t)}.
\end{equation}
Now we consider cases.  If $\Vert Tu\Vert_{\infty}\ge1$, then \eqref{eq2.19bbb} leads to
\begin{equation}\label{eq2.19}
\begin{split}
\rho_2&=\int_0^1b(1-t)\big(u(t)\big)^{p(t)}\ dt\\
&\le\int_0^1b(1-t)\big(\mu u(t)\big)^{p(t)}\ dt\\
&=\int_0^1b(1-t)\big(Tu(t)\big)^{p(t)}\ dt\\
&\le\int_0^1b(1-t)\Vert Tu\Vert_{\infty}^{p^+}\ dt\\
&=\int_0^1b(1-t) \left[\max_{\tau\in[0,1]}\lambda\int_0^1\left( A \left( \rho_2 \right)\right)^{-1}G(\tau, s)f(s,u(s))\ ds\right]^{p^+}\ dt\\
&\le\left(\frac{\lambda f_{[0,1]\times\left[0,M_{\rho_2}\right]}^{M}}{A\left(\rho_2\right)}\max_{\tau\in[0,1]}\int_0^1G(\tau,s)\ ds\right)^{p^+}\int_0^1b(1-t)\ dt\\
&=\left(\frac{\lambda f_{[0,1]\times\left[0,M_{\rho_2}\right]}^{M}G^M}{A\left(\rho_2\right)}\right)^{p^+}\int_0^1b(1-t)\ dt,
`\end{split}
\end{equation}
which, in light of condition (2) in the statement of the theorem, is a contradiction.  On the other hand, if $\Vert Tu\Vert_{\infty}<1$, then \eqref{eq2.19bbb} leads, in a similar manner as \eqref{eq2.19}, to
\begin{equation}\label{eq2.21bbb}
\begin{split}
\rho_2&\le\int_0^1b(1-t)\big(Tu(t)\big)^{p(t)}\ dt\\
&\le\int_0^1b(1-t)\Vert Tu\Vert_{\infty}^{p^-}\ dt\\
&\le\left(\frac{\lambda f_{[0,1]\times\left[0,M_{\rho_2}\right]}^{M}G^M}{A\left(\rho_2\right)}\right)^{p^-}\int_0^1b(1-t)\ dt,
\end{split}
\end{equation}
which, in light of condition (2) in the statement of the theorem, is also a contradiction.  Therefore, from part (ii) of Lemma \ref{lemma2.4aaa} we conclude jointly from \eqref{eq2.19} and \eqref{eq2.21bbb} that
\begin{equation}\label{eq2.20}
i_{\mathscr{K}}\left(T,\widehat{V}_{\rho_2}\right)=1.
\end{equation}

All in all, combining both \eqref{eq2.18} and \eqref{eq2.20} we conclude from part (4) of Lemma \ref{lemma2.4aaa} that there exists
\begin{equation}
u_0\in\widehat{V}_{\rho_2}\setminus\overline{\widehat{V}_{\rho_1}}\notag
\end{equation}
such that $Tu_0\equiv u_0$, and so, $u_0$ is a positive solution of \eqref{eq1.1} subject to the boundary data implied by the Green's function $G$.  Moreover, an application of Lemmata \ref{lemma2.3aaa}--\ref{lemma2.3} implies that
\begin{equation}
\left(\frac{\rho_1}{(b*\bm{1})(1)}\right)^{\frac{1}{p^+}}+\varepsilon\left(\rho_1,b\right)\le\Vert u\Vert_{\infty}\le C_0^{-1}2^{\frac{p^+-q}{p^-}}\left[\rho_2^{\frac{1}{q}}\left(\left(b^{\frac{1}{1-q}}*\bm{1}\right)(1)\right)^{\frac{q-1}{q}}+1\right]^{\frac{q}{p^-}},\notag
\end{equation}
as claimed.  And this completes the proof of the theorem.
\end{proof}

Recall from Section 1 that in case $b\equiv\bm{1}$ we arrive at an important model case.  Indeed, in this case the nonlocal data assumes the form
\begin{equation}
A\left(\left(b*u^{p(\cdot)}\right)(1)\right)\equiv A\left(\left(\bm{1}*u^{p(\cdot)}\right)(1)\right)=A\left(\int_0^1\big(u(s)\big)^{p(s)}\ ds\right).\notag
\end{equation}
In addition, if $p\equiv p_0$, then a further important model case is obtained -- namely,
\begin{equation}
A\left(\left(b*u^{p(\cdot)}\right)(1)\right)\equiv A\left(\int_0^1\big(u(s)\big)^{p_0}\ ds\right)=A\left(\Vert u\Vert_{L^{p_0}}^{p_0}\right).\notag
\end{equation}
Corollaries \ref{corollary2.10}--\ref{corollary2.11} address these special cases by reformulating Theorem \ref{theorem2.8} in each model case.

Prior to stating these corollaries, however, we state and prove a technical lemma.  The upshot of this lemma is that it will help us provide a more refined localisation for the upper bound of $\Vert u\Vert_{\infty}$ in the statement of the corollaries.

\begin{lemma}\label{lemma2.9}
Define the function $\varphi\ : \ (1,p^-)\rightarrow(0,+\infty)$ by
\begin{equation}
\varphi(q):=C_0^{-1}2^{\frac{p^+-q}{p^-}}\left(\rho^{\frac{1}{q}}+1\right)^{\frac{q}{p^-}},\notag
\end{equation}
where $\rho>0$, $C_0$ is as defined earlier in this section, and $1<p^-<p^+<+\infty$.  Then $\varphi$ is a nonincreasing function on its domain.
\end{lemma}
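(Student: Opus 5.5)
The plan is to differentiate $\log\varphi$ with respect to $q$ and show the derivative is nonpositive on $(1,p^-)$. Since $C_0^{-1}>0$ is independent of $q$ and every other factor in $\varphi$ is positive, $\varphi$ is a smooth positive function of $q$. Hence it suffices to prove that $\frac{d}{dq}\log\varphi(q)\le0$. Writing
\begin{equation}
p^-\log\varphi(q)=p^-\log C_0^{-1}+(p^+-q)\log2+q\log\left(1+\rho^{\frac{1}{q}}\right),\notag
\end{equation}
the first term is constant and the second has derivative $-\log2$. So the problem reduces to showing that $g(q):=q\log\bigl(1+\rho^{1/q}\bigr)$ satisfies $g'(q)\le\log2$ for every $q\in(1,p^-)$.

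The key step is a change of variables that removes the dependence on $\rho$. Differentiating directly gives
\begin{equation}
g'(q)=\log\left(1+\rho^{\frac{1}{q}}\right)-\frac{\rho^{\frac{1}{q}}\,q^{-1}\log\rho}{1+\rho^{\frac{1}{q}}}.\notag
\end{equation}
Setting $y:=\rho^{1/q}>0$, we have $q^{-1}\log\rho=\log y$, so $g'(q)=h(y)$, where
\begin{equation}
h(y):=\log(1+y)-\frac{y\log y}{1+y}.\notag
\end{equation}
It then suffices to show $\sup_{y>0}h(y)=\log2$, and this bound holds uniformly in $\rho>0$ and $q$.

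To bound $h$, compute $h'(y)$. A short calculation combines the terms over $(1+y)^2$, and the numerator collapses to $(1+y)-(1+y)(\log y+1)+y\log y=-\log y$. This gives
\begin{equation}
h'(y)=-\frac{\log y}{(1+y)^2}.\notag
\end{equation}
Thus $h$ increases on $(0,1]$ and decreases on $[1,+\infty)$, so its global maximum is $h(1)=\log 2$.

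Consequently $p^-\frac{d}{dq}\log\varphi(q)=-\log2+h\bigl(\rho^{1/q}\bigr)\le0$, and $\varphi$ is nonincreasing on $(1,p^-)$. Equality holds only when $\rho=1$, in which case $\varphi$ is constant, which is consistent with ``nonincreasing''. The only step needing care is the algebraic simplification of $h'$; once the substitution $y=\rho^{1/q}$ is made, the rest is routine.
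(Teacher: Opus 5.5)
Your proof is correct and follows the paper's route: differentiate in $q$, substitute $y=\rho^{1/q}$ (using $\log\rho=q\log y$) to eliminate $\rho$, and reduce to a one-variable inequality whose extremum is at $y=1$. In fact $\log 2-h(y)$ is exactly the paper's auxiliary function $y\ln y+(y+1)\ln\frac{2}{y+1}$ divided by $1+y$, and this normalisation (together with working with $\log\varphi$) lets you read off the sign from $h'(y)=-\log y/(1+y)^2$ in one step instead of the paper's three cases $\rho=1$, $\rho>1$, $\rho<1$.
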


\begin{proof}
First note that
\begin{equation}\label{eq2.23mmm}
\varphi'(q)=-\frac{1}{qp^-}C_0^{-1}\left(\rho^{\frac{1}{q}}+1\right)^{\frac{q}{p^-}-1}\left(\rho^{\frac{1}{q}}\ln{(\rho)}+q\left(\rho^{\frac{1}{q}}+1\right)\left(\ln{(2)}-\ln{\left(\rho^{\frac{1}{q}}+1\right)}\right)\right)2^{\frac{p^+-q}{p^-}}.
\end{equation}
Now, since
\begin{equation}
\min\left\{C_0^{-1},2^{\frac{p^+-q}{p^-}},\rho^{\frac{1}{q}},q,p^-\right\}>0,\notag
\end{equation}
it follows from \eqref{eq2.23mmm} that the sign of $\varphi'(q)$ is determined by the sign of
\begin{equation}
-\left(\rho^{\frac{1}{q}}\ln{(\rho)}+q\left(\rho^{\frac{1}{q}}+1\right)\left(\ln{(2)}-\ln{\left(\rho^{\frac{1}{q}}+1\right)}\right)\right).\notag
\end{equation}
Therefore, to prove that $\varphi$ is nonincreasing it suffices to prove that
\begin{equation}\label{eq2.24mmm}
\left(\rho^{\frac{1}{q}}\ln{(\rho)}+q\left(\rho^{\frac{1}{q}}+1\right)\left(\ln{(2)}-\ln{\left(\rho^{\frac{1}{q}}+1\right)}\right)\right)\ge0
\end{equation}
whenever $1<q<p^-$.

In order to establish inequality \eqref{eq2.24mmm} we consider cases based on the magnitude of $\rho$.  So, in case $\rho=1$, we note that
\begin{equation}
\left(1^{\frac{1}{q}}\ln{(1)}+q\left(1^{\frac{1}{q}}+1\right)\left(\ln{(2)}-\ln{\left(1^{\frac{1}{q}}+1\right)}\right)\right)=0.\notag
\end{equation}
Thus, inequality \eqref{eq2.24mmm} is trivially satisfied in this case.

Next consider the case in which $\rho>1$.  Then $\ln{\rho}>0$ and $\rho^{\frac{1}{q}}>1$.  Put
\begin{equation}
a:=\rho^{\frac{1}{q}}\notag
\end{equation}
so that $a>1$.  Then inequality \eqref{eq2.24mmm} is equivalent to
\begin{equation}\label{eq2.25mmm}
a\ln{(\rho)}+q(a+1)\big(\ln{2}-\ln{(a+1)}\big)\ge0.
\end{equation}
Note that inequality \eqref{eq2.25mmm} is true if and only if
\begin{equation}\label{eq2.26mmm}
a\ln{(a)}+(a+1)\ln{\frac{2}{a+1}}\ge0,
\end{equation}
using that
\begin{equation}
\ln{\rho}=q\ln{a}.\notag
\end{equation}
To prove that \eqref{eq2.26mmm} is true, define the function $h\ : \ (1,+\infty)\rightarrow\mathbb{R}$ by
\begin{equation}
h(a):=a\ln{(a)}+(a+1)\ln{\frac{2}{a+1}}.\notag
\end{equation}
Note that
\begin{equation}
h'(a)=\ln{\frac{2a}{a+1}}.\notag
\end{equation}
Now, since $a>1$, it follows that $2a>a+1$, and so, $\displaystyle\frac{2a}{a+1}>1$.  Therefore,
\begin{equation}\label{eq2.27mmm}
\ln{\frac{2a}{a+1}}>\ln{1}=0,
\end{equation}
whenever $a>1$.  But then inequality \eqref{eq2.27mmm} implies that $h'(a)>0$ for $a\in(1,+\infty)$.  Therefore, $h$ is strictly increasing on $(1,+\infty)$.  Moreover, observe that
\begin{equation}
\lim_{a\to1^+}h(a)=1\cdot\ln{(1)}+(1+1)\ln{\frac{2}{1+1}}=0.\notag
\end{equation}
Consequently, since $h$ is strictly increasing on $(0,+\infty)$ and $\displaystyle\lim_{a\to1^+}h(a)=0$, it follows that $h(a)>0$ for all $a>1$.  Thus, inequality \eqref{eq2.26mmm} is true, and so, we conclude that inequality \eqref{eq2.24mmm} is likewise true whenever $1<q<p^-$ and $\rho>1$.  And this completes the second case.

Finally, the third case occurs when $0<\rho<1$.  In this case note both that $\ln{\rho}<0$ and that $0<\rho^{\frac{1}{q}}<1$.  Put
\begin{equation}
b:=\rho^{\frac{1}{q}},\notag
\end{equation}
where $0<b<1$.  Then inequality \eqref{eq2.24mmm} becomes
\begin{equation}\label{eq2.28mmm}
b\ln{(\rho)}+q(b+1)\big(\ln{2}-\ln{(b+1)}\big)\ge0.
\end{equation}
Similar to the previous case, the fact that
\begin{equation}
\ln{\rho}=q\ln{b}\notag
\end{equation}
allows us to recast inequality \eqref{eq2.28mmm} in the equivalent form
\begin{equation}\label{eq2.29mmm}
b\ln{(b)}+(b+1)\ln{\frac{2}{b+1}}\ge0.
\end{equation}
Now, define the function $m\ : \ (0,1)\rightarrow\mathbb{R}$ by
\begin{equation}
m(b):=b\ln{(b)}+(b+1)\ln{\frac{2}{b+1}}.\notag
\end{equation}
Note that
\begin{equation}
m'(b)=\ln{\frac{2b}{b+1}}.\notag
\end{equation}
Since $0<b<1$, we notice that
\begin{equation}
0<\frac{2b}{b+1}<1,\notag
\end{equation}
from which it follows that
\begin{equation}\label{eq2.30mmm}
\ln{\frac{2b}{b+1}}<\ln{1}=0.
\end{equation}
But then inequality \eqref{eq2.30mmm} implies that $m'(b)<0$ for $b\in(0,1)$, and so, $m$ is strictly decreasing on the interval $(0,1)$.  Since, in addition, we see that
\begin{equation}
\lim_{b\to1^-}m(b)=0,\notag
\end{equation}
it follows that $m(b)>0$ for all $0<b<1$.  Consequently, inequality \eqref{eq2.29mmm} is true for all $0<b<1$, and so, inequality \eqref{eq2.28mmm} is likewise true.  And this means that inequality \eqref{eq2.24mmm} holds whenever $0<\rho<1$.  So, the third case is dispatched.

Since the preceding three cases are exhaustive, we conclude that inequality \eqref{eq2.24mmm} holds for all $1<q<p^-$ and all $\rho>0$.  And this means that
\begin{equation}
\varphi'(q)\le0\notag
\end{equation}
for all $1<q<p^-$.  Therefore, $\varphi$ is a nonincreasing function on its domain, as claimed.
\end{proof}

With Lemma \ref{lemma2.9} in hand, we now state the promised corollaries to Theorem \ref{theorem2.8}.  In the statement of each of the corollaries, we use the notation
\begin{equation}
M_{\rho}^{*}:=C_0^{-1}2^{\frac{p^+-p^-}{p^-}}\left(\rho^{\frac{1}{p^-}}+1\right)\notag
\end{equation}
for $\rho>0$.

\begin{corollary}\label{corollary2.10}
Suppose that $b\equiv\bm{1}$.  In addition, suppose that the numbers $\rho_1$ and $\rho_2$ from condition (H1.3) satisfy the following inequalities.
\vskip0.2cm
\begin{enumerate}
\item $\displaystyle\eta_0^{p^+}\left[2^{1-p^+}\left(\frac{\lambda f_{[\alpha,\beta]\times\left[\eta_0m_{\rho_1},M_{\rho_1}^*\right]}^{m}}{A\left(\rho_1\right)}\max_{\tau\in[0,1]}\int_{\alpha}^{\beta}G(\tau,s)\ ds\right)^{p^-}-1\right]>\frac{\rho_1}{\beta-\alpha}$
\vskip0.2cm
\item $\displaystyle\max\left\{\left(\frac{\lambda f_{[0,1]\times\left[0,M_{\rho_2}^*\right]}^{M}G^M}{A\left(\rho_2\right)}\right)^{p^-},\left(\frac{\lambda f_{[0,1]\times\left[0,M_{\rho_2}^*\right]}^{M}G^M}{A\left(\rho_2\right)}\right)^{p^+}\right\}<\rho_2$
\end{enumerate}
\vskip0.2cm
If each of conditions (H1)--(H3) holds, then problem \eqref{eq1.1}, subject to the boundary data encoded by the Green's function $G$, has at least one positive solution $u_0\in\widehat{V}_{\rho_2}\setminus\overline{\widehat{V}_{\rho_1}}$ satisfying the localisation
\begin{equation}
\rho_1^{\frac{1}{p^+}}+\varepsilon\left(\rho_1,\bm{1}\right)\le\Vert u_0\Vert_{\infty}\le C_0^{-1}2^{\frac{p^+-p^-}{p^-}}\left(\rho_2^{\frac{1}{p^-}}+1\right).\notag
\end{equation}
\end{corollary}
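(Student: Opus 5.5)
The plan is to specialise Theorem \ref{theorem2.8} to $b\equiv\bm{1}$ and then optimise over the free parameter $q$ by means of Lemma \ref{lemma2.9}. When $b\equiv\bm{1}$ we have $(b*\bm{1})(1)=1$, $\int_\alpha^\beta b(1-t)\,dt=\beta-\alpha$, $\int_0^1b(1-t)\,dt=1$, and $b^{\frac{1}{1-q}}\equiv\bm{1}\in L^1$ for every $q\in(1,p^-)$, with $\big(b^{\frac{1}{1-q}}*\bm{1}\big)(1)=1$. Consequently the integrability hypothesis of Theorem \ref{theorem2.8} holds for every such $q$, $m_{\rho}=\rho^{1/p^+}+\varepsilon(\rho,\bm{1})$, and $M_\rho=\varphi(q)$, where $\varphi$ is the function of Lemma \ref{lemma2.9}. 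Dividing condition (1) of the theorem by $\beta-\alpha$ and multiplying by $\eta_0^{p^+}$ gives exactly condition (1) of the corollary, except that $M_{\rho_1}$ appears in place of $M^*_{\rho_1}$. Condition (2) likewise becomes condition (2) of the corollary with $M_{\rho_2}$ in place of $M^*_{\rho_2}$.

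The key observation is that $\varphi(q)\to M^*_\rho$ as $q\to (p^-)^-$. Indeed, $2^{\frac{p^+-q}{p^-}}\to 2^{\frac{p^+-p^-}{p^-}}$ and $(\rho^{1/q}+1)^{q/p^-}\to \rho^{1/p^-}+1$. By Lemma \ref{lemma2.9}, $\varphi$ is nonincreasing on $(1,p^-)$, so $\varphi(q)\ge M^*_\rho$ for every $q$, and $\varphi(q)\downarrow M^*_\rho$.

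Matching the hypotheses therefore reduces to the following. Enlarging the $u$-interval can only decrease $f^m$ and increase $f^M$, so the corollary's conditions stated with $M^*$ are the weakest forms. We must show that they imply the theorem's conditions for $q$ close to $p^-$. The quantity $f^m_{[\alpha,\beta]\times[\eta_0m_{\rho_1},\,s]}$ is nonincreasing and continuous in $s$, by uniform continuity of $f$ on compacta. Likewise $f^M_{[0,1]\times[0,s]}$ is nondecreasing and continuous in $s$. Both corollary conditions are strict inequalities. By continuity they persist when $M^*_{\rho_i}$ is replaced by $M_{\rho_i}=\varphi(q)$, for $q<p^-$ sufficiently close to $p^-$. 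We fix such a $q$ and apply Theorem \ref{theorem2.8} to obtain $u_0\in\widehat V_{\rho_2}\setminus\overline{\widehat V_{\rho_1}}$.

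For the localisation, the lower bound comes directly from the theorem, or from Lemma \ref{lemma2.3aaa}. For the upper bound, the theorem gives $\|u_0\|_\infty\le\varphi(q)$. More sharply, since $u_0\in\widehat V_{\rho_2}$, Lemma \ref{lemma2.3} applies for every $q\in(1,p^-)$, because the hypothesis holds for all such $q$. Letting $q\to p^-$ and using Lemma \ref{lemma2.9} gives $\|u_0\|_\infty\le M^*_{\rho_2}$. The main obstacle is the continuity and limiting step: the corollary's hypotheses involve the limiting value $M^*$, which is not attained by any admissible $q$. This must be handled by the strictness of the inequalities together with the continuity of the restricted extrema of $f$ in the endpoint.
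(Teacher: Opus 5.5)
Your argument is correct, but it takes a different route from the paper's.

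\textbf{Your route.} You apply Theorem \ref{theorem2.8} as a black box for a single fixed $q$ close to $p^-$. You close the gap between $M_{\rho_i}=\varphi(q)$ and $M^*_{\rho_i}$ by perturbation, using three facts:
\begin{enumerate}
\item both hypotheses are strict inequalities;
\item $\varphi(q)\to M^*_{\rho}$ as $q\to(p^-)^-$;
\item $s\mapsto f^m_{[\alpha,\beta]\times[\eta_0m_{\rho_1},s]}$ and $s\mapsto f^M_{[0,1]\times[0,s]}$ are continuous.
\end{enumerate}
One $q$ is then chosen to work for $\rho_1$ and $\rho_2$ simultaneously. All of this is sound.

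\textbf{The paper's route.} The paper never invokes Theorem \ref{theorem2.8} as stated; it reopens its proof. Since $b\equiv\bm{1}$ makes Lemma \ref{lemma2.3} available for every $q\in(1,p^-)$, every $u\in\partial\widehat{V}_{\rho}$ satisfies $\Vert u\Vert_{\infty}\le\inf_{q}\varphi(q)=M^*_{\rho}$. In the proof of Theorem \ref{theorem2.8}, $M_{\rho}$ serves only as an a priori upper bound on $u(t)$ for $u\in\partial\widehat{V}_{\rho_1}$ and $u\in\partial\widehat{V}_{\rho_2}$, when bounding $f^m$ and $f^M$. So the same index computations go through verbatim with $M^*_{\rho}$ in place of $M_{\rho}$.

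\textbf{Comparison.} The paper's route is shorter. It needs neither the strictness of the hypotheses nor any continuity of the restricted extrema in the endpoint. Yours is modular, relying only on the statement of Theorem \ref{theorem2.8}. The price is the limiting argument, which you correctly identified as the main obstacle and handled properly.

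\textbf{Two minor remarks.} First, for the bound $\Vert u_0\Vert_{\infty}\le M^*_{\rho_2}$, both routes really only need the convergence $\varphi(q)\to M^*_{\rho_2}$. Lemma \ref{lemma2.9} merely identifies $M^*_{\rho}$ as the infimum. Second, if you take the lower bound from Lemma \ref{lemma2.3aaa} rather than from the theorem's localisation, note that $u_0$ lies on $\partial\widehat{V}_{\rho'}$ for some $\rho'\in(\rho_1,\rho_2)$. You then also need that $\rho\mapsto m_{\rho}$ is nondecreasing, which it is.
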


\begin{proof}
We only mention how one obtains, on the one hand, the upper bound in the localisation of $\Vert u_0\Vert_{\infty}$ and, on the other hand, the new right-endpoint on the second set in both $f^m$ and $f^M$.  So, note that because $b\equiv\bm{1}$, it follows that $b^{\frac{1}{1-q}}\in L^{1}\big((0,1)\big)$ for any $q\neq1$.  In particular, $b^{\frac{1}{1-q}}\in L^{1}\big((0,1)\big)$ for every $1<q<p^-$.  Now, note that
\begin{equation}\label{eq2.21}
q\mapsto C_0^{-1}2^{\frac{p^+-q}{p^-}}\left(\rho_2^{\frac{1}{q}}+1\right)^{\frac{q}{p^-}}
\end{equation}
is a decreasing function of $q$, owing to the conclusion of Lemma \ref{lemma2.9}.  But this means that
\begin{equation}\label{eq2.22}
\inf_{q\in\left(1,p^-\right)}C_0^{-1}2^{\frac{p^+-q}{p^-}}\left(\rho_2^{\frac{1}{q}}+1\right)^{\frac{q}{p^-}}=C_0^{-1}2^{\frac{p^+-p^-}{p^-}}\left(\rho_2^{\frac{1}{p^-}}+1\right).
\end{equation}
On account of \eqref{eq2.22}, therefore, by Lemma \ref{lemma2.3} we see that
\begin{equation}\label{eq2.33}
\Vert u_0\Vert_{\infty}\le C_0^{-1}2^{\frac{p^+-p^-}{p^-}}\left(\rho_2^{\frac{1}{p^-}}+1\right),
\end{equation}
as claimed in the localisation.  Moreover, inequality \eqref{eq2.33} also implies that
\begin{equation}
u(t)\le\Vert u\Vert_{\infty}\le M_{\rho}^*\text{, }t\in[0,1]\text{, }u\in\partial\widehat{V}_{\rho},\notag
\end{equation}
which justifies the alteration in the second set in subscript of both $f^m$ and $f^M$.  And this completes the proof of the corollary.
\end{proof}

\begin{corollary}\label{corollary2.11}
Suppose both that $b\equiv\bm{1}$ and $p(t)\equiv p_0$, where $p_0>1$.  In addition, suppose that the numbers $\rho_1$ and $\rho_2$ from condition (H1.3) satisfy the following inequalities.
\vskip0.2cm
\begin{enumerate}
\item $\displaystyle\eta_0^{p_0}\left[2^{1-p_0}\left(\frac{\lambda f_{[\alpha,\beta]\times\left[\eta_0m_{\rho_1},M_{\rho_1}^*\right]}^{m}}{A\left(\rho_1\right)}\max_{\tau\in[0,1]}\int_{\alpha}^{\beta}G(\tau,s)\ ds\right)^{p_0}-1\right]>\frac{\rho_1}{\beta-\alpha}$
\vskip0.2cm
\item $\displaystyle\left(\frac{\lambda f_{[0,1]\times\left[0,M_{\rho_2}^*\right]}^{M}G^M}{A\left(\rho_2\right)}\right)^{p_0}<\rho_2$
\end{enumerate}
\vskip0.2cm
If each of conditions (H1)--(H3) holds, then problem \eqref{eq1.1}, subject to the boundary data encoded by the Green's function $G$, has at least one positive solution $u_0\in\widehat{V}_{\rho_2}\setminus\overline{\widehat{V}_{\rho_1}}$ satisfying the localisation
\begin{equation}
\rho_1^{\frac{1}{p_0}}\le\Vert u_0\Vert_{\infty}\le\frac{1}{C_0}\left(\rho_2^{\frac{1}{p_0}}+1\right).\notag
\end{equation}
\end{corollary}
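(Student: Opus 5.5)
The plan is to obtain Corollary \ref{corollary2.11} as the special case $p^-=p^+=p_0$ of Corollary \ref{corollary2.10}, and ultimately of Theorem \ref{theorem2.8}. Condition (H2) only asks that $1<p^-\le p(t)\le p^+<+\infty$, so the constant exponent $p(t)\equiv p_0>1$ is admissible with $p^-=p^+=p_0$.

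First I check that the auxiliary results still apply when $p^-=p^+$. Lemma \ref{lemma2.1} with $q\in[1,p_0)$ reduces to $f^{p_0/q}\ge f^{p_0/q}-1$, which is trivial. Lemma \ref{lemma2.3aaa} is unaffected. Lemma \ref{lemma2.3} needs some $q\in(1,p_0)$, and such $q$ exists because $p_0>1$. Since $b\equiv\bm 1$, we have $b^{\frac{1}{1-q}}\equiv\bm 1\in L^1$ and $\bigl(b^{\frac{1}{1-q}}*\bm 1\bigr)(1)=1$. The proof of Lemma \ref{lemma2.9} never uses $p^-<p^+$, because the sign of $\varphi'$ does not depend on $p^\pm$. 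Hence
\begin{equation}
q\mapsto C_0^{-1}2^{\frac{p_0-q}{p_0}}\left(\rho_2^{\frac1q}+1\right)^{\frac{q}{p_0}}\notag
\end{equation}
is nonincreasing on $(1,p_0)$. Letting $q\to p_0^-$ gives the bound $\Vert u\Vert_\infty\le C_0^{-1}\bigl(\rho^{1/p_0}+1\bigr)=M_\rho^*$ for $u\in\overline{\widehat V_\rho}$, exactly as in the proof of Corollary \ref{corollary2.10}.

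Next I specialise the hypotheses. With $b\equiv\bm 1$ we have $(b*\bm 1)(1)=1$ and $\int_\alpha^\beta b(1-t)\,dt=\beta-\alpha$. Multiplying condition (1) of Theorem \ref{theorem2.8} by $\eta_0^{p_0}/(\beta-\alpha)$ turns it into condition (1) of the corollary, with $M_{\rho_1}$ replaced by the smaller $M_{\rho_1}^*$ as justified above. In condition (2) the two exponents coincide, so the maximum collapses to a single term, and $\int_0^1 b(1-t)\,dt=1$. For the contradiction arguments, I rerun the proof of Theorem \ref{theorem2.8} with these substitutions.
\begin{itemize}
\item In \eqref{eq2.15} the step using Lemma \ref{lemma2.1} is harmless, so the index-zero argument on $\partial\widehat V_{\rho_1}$ gives $i_{\mathscr K}\bigl(T,\widehat V_{\rho_1}\bigr)=0$.
\item In \eqref{eq2.19}--\eqref{eq2.21bbb} no case split is needed, and the index-one argument on $\partial\widehat V_{\rho_2}$ gives $i_{\mathscr K}\bigl(T,\widehat V_{\rho_2}\bigr)=1$.
\end{itemize}
Lemma \ref{lemma2.4aaa}(4) then yields a fixed point $u_0\in\widehat V_{\rho_2}\setminus\overline{\widehat V_{\rho_1}}$.

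For the localisation, the lower bound from Lemma \ref{lemma2.3aaa} is $\rho_1^{1/p_0}+\varepsilon(\rho_1,\bm 1)$. Here $\varepsilon(\rho_1,\bm 1)=0$ in both branches, since $\rho_1^{1/p^-}-\rho_1^{1/p^+}=0$ when $p^-=p^+$. This leaves $\rho_1^{1/p_0}$, and the upper bound is $C_0^{-1}\bigl(\rho_2^{1/p_0}+1\bigr)$ from the previous step.

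The main point needing care is the degenerate case $p^-=p^+$ in the earlier lemmas, whose statements were written with $p^-<p^+$ in mind. The formal route is to verify line by line that strictness is never used, as above. A cleaner alternative is to redo the two estimates directly: $\rho=\int_0^1u^{p_0}$, Jensen, and coercivity give $\rho\ge C_0^{p_0}\Vert u\Vert_\infty^{p_0}$, which yields an upper bound that is even better than the stated one. Likewise $\rho\le\Vert u\Vert_\infty^{p_0}$ gives the lower bound immediately.
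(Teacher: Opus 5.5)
Your proposal is correct and follows the paper's own route. You specialise Theorem~\ref{theorem2.8} (via Corollary~\ref{corollary2.10}) to $p^-=p^+=p_0$, rerun the two index arguments with the improved a priori bound $M_\rho^*$ obtained as the $q\to p_0^-$ infimum from Lemma~\ref{lemma2.9}, and observe that $\varepsilon(\rho_1,\bm 1)=0$. Your extra checks that the degenerate case $p^-=p^+$ is harmless are sound, as is your Jensen shortcut. The only slip is cosmetic: Lemma~\ref{lemma2.1} actually reduces to $f^{p_0/q}\ge 2^{1-p_0/q}f^{p_0/q}-1$, not $f^{p_0/q}\ge f^{p_0/q}-1$, and this is still trivially true.
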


\begin{proof}
Similar to the proof of Corollary \ref{corollary2.10}, the only material change to the proof of Theorem \ref{theorem2.8} is the localisation argument.  In this case, from Lemma \ref{lemma2.9} we obtain
\begin{equation}
\inf_{q\in\left(1,p^-\right)}C_0^{-1}2^{\frac{p^+-q}{p^-}}\left(\rho_2^{\frac{1}{q}}+1\right)^{\frac{q}{p^-}}=C_0^{-1}\left(\rho_2^{\frac{1}{p^-}}+1\right),\notag
\end{equation}
keeping in mind that $p^-=p_0=p^+$.  At the same time, note that
\begin{equation}
\varepsilon\left(\rho_1,\bm{1}\right)\equiv\bm{0},\notag
\end{equation}
owing to the fact that $p^-=p_0=p^+$.
\end{proof}

\begin{remark}\label{remark2.9}
Let's compare Theorem \ref{theorem2.8} to \cite[Theorem 2.12]{goodrich23}, seeing as each treats the variable exponent setting.  So, unlike in \cite{goodrich23}, we do not assume that the function $f$ satisfies any particular growth condition.  In particular, in \cite{goodrich23} it was assumed that
\begin{equation}
f(t,u)\ge c_1u^q\notag
\end{equation}
whenever $(t,u)\in[\alpha,\beta]\times[0,+\infty)$, with $c_1>0$ and $q\ge0$.  In Theorem \ref{theorem2.8} we have not made any such assumption, and this is a consequence of the fundamental inequality in Lemma \ref{lemma2.1}, an observation that was not utilised in \cite{goodrich23}.  In addition, as seen from conditions (2)--(4) of \cite[Theorem 2.12]{goodrich23}, three conditions were imposed on the nonlocal coefficient $A$, beyond the positivity condition on the interval $\big[\rho_1,\rho_2\big]$.  By contrast, in Theorem \ref{theorem2.8} we have only imposed two conditions on $A$ beyond the positivity condition.  All in all, the use of Lemma \ref{lemma2.1} allows us to streamline the analysis of the variable exponent problem in a way that was not possible in \cite{goodrich23}.
\end{remark}

We conclude with an example to illustrate the application of the results presented in this section.

\begin{example}\label{example2.12}
Since the case in which $b\equiv\bm{1}$ is an important special case, we will illustrate the application of Corollary \ref{corollary2.10}.  Define the nonlocal coefficient $A$ by
\begin{equation}
A(t):=\frac{1000}{3}t\sin{\left(\frac{\pi}{6}t\right)}.\notag
\end{equation}
If we take $G$ defined by \eqref{eq2.1mmm}, then we we are considering a nonlocal Dirichlet problem.  As in \cite{goodrich23}, let us choose 
\begin{equation}
p(t):=\frac{7}{2}+\frac{3}{2}\cos(t).\notag
\end{equation}
Therefore, we considering the following problem.
\begin{equation}\label{eq2.35}
\begin{split}
-A\left(\int_0^1\big(u(s)\big)^{\frac{7}{2}+\frac{3}{2}\cos{s}}\ ds\right)u''(t)&=\lambda f\big(t,u(t)\big)\text{, }0<t<1\\
u(0)&=0\\
u(1)&=0
\end{split}
\end{equation}

Now, for the choice of the variable exponent $p$, we see that, for all $t \in \mathbb{R}$,
\begin{equation}
2=:p^-\leq p(t)\leq p^+:=5.\notag
\end{equation}
Moreover, if we select
\begin{equation}
\alpha:=\frac{1}{4}\text{ and }\beta:=\frac{3}{4},\notag
\end{equation}
then for $G$ it is known (see \cite{erbe1}) that
\begin{equation}
\eta_0=\frac{1}{4}.\notag
\end{equation}
One can then show both that
\begin{equation}
\max_{\tau\in[0,1]}\int_{\alpha}^{\beta}G(\tau,s)\ ds=\frac{3}{32}\notag
\end{equation}
and that 
\begin{equation}
\max_{\tau\in[0,1]}\int_{0}^{1}G(\tau, s)\ ds=\frac{1}{8}.\notag
\end{equation}

Now, we choose $\rho_1$ and $\rho_2$ as follows.
\begin{equation}
\begin{split}
\rho_1&=\frac{1}{2500}\\
\rho_2&=3\notag
\end{split}
\end{equation}
Then conditions (1) and (2) of Corollary \ref{corollary2.10} are the following.
\vskip0.2cm
\begin{enumerate}
\item $\displaystyle\lambda f^m_{\left[\frac{1}{4},\frac{3}{4}\right]\times\left[\frac{1}{200},\frac{102}{25}\sqrt{2}\right]}>\left(\frac{256}{375}\sqrt{\frac{379}{3}}\right)\sin{\frac{\pi}{15000}}$
\vskip0.2cm
\item $\displaystyle\lambda f^M_{\left[0, 1\right]\times\left[0,4\sqrt2(\sqrt3+1)\right]}<8000\sqrt[5]{3}$
\end{enumerate}
\vskip0.2cm
Note that condition (H1.3) is also satisfied as $A(t)>0$ whenever $\displaystyle t\in\left[\frac{1}{2500},3\right]$.  Hence provided that $\lambda$ and $f$ satisfy (to three decimal places of accuracy) both
\begin{equation}
\lambda f^m_{\left[\frac{1}{4},\frac{3}{4}\right] \times \left[\frac{1}{200},\frac{102}{25}\sqrt{2}\right]}>\left(\frac{256}{375}\sqrt{\frac{379}{3}}\right)\sin{\frac{\pi}{15000}}\approx0.002\notag
\end{equation}
and
\begin{equation}
\lambda f^M_{\left[0, 1\right]\times\left[0, 4\sqrt2(\sqrt3+1)\right]}<8000\sqrt[5]{3}\approx9965.848,\notag
\end{equation}
then Corollary \ref{corollary2.10} implies that problem \eqref{eq2.35} has at least one positive solution, $u_0$, satisfying
\begin{equation}
u_0\in\widehat{V}_3\setminus\overline{\widehat{V}_{\frac{1}{2500}}}.\notag
\end{equation}
Moreover, we can also conclude from Corollary \ref{corollary2.10} that $u_0$ satisfies the localisation
\begin{equation}
0.02=\frac{1}{50}<\Vert u_0\Vert_{\infty}<4\sqrt{2}\big(\sqrt{3}+1\big)\approx15.455.\notag
\end{equation}
\end{example}

\begin{remark}\label{remark2.14}
We note, in Example \ref{example2.12}, both that
\begin{equation}
A(0)=0\notag
\end{equation}
and that
\begin{equation}
\liminf_{t\to+\infty}A(t)=-\infty.\notag
\end{equation}
As suggested in Section 1, we emphasise that these are unusual allowances in the theory of nonlocal differential equations.
\end{remark}

\section{Existence Theory for \eqref{eq1.1} in Case $0<p(x)\le1$}

In this section we recast the results of Section 2 in case $0<p(x)\le1$.  For the most part, there are few dissimilarities between the cases $0<p(x)\le1$ and $1<p(x)<+\infty$.  However, as will be seen in the sequel, a few do arise.  We begin by rephrasing condition (H2) from Section 2.
\begin{list}{}{\setlength{\leftmargin}{.5in}\setlength{\rightmargin}{0in}}
\item[\textbf{H2.A:}] The function $p\ : \ [0,1]\rightarrow(0,1]$ is continuous, and there exist real numbers $p^-$ and $p^+$ such that, for each $t\in[0,1]$,
\begin{equation}
0<p^-\le p(t)\le p^+\le1.\notag
\end{equation}
\end{list}
Condition (H2.A) will be in force throughout this section.

Our first couple results, namely Lemma \ref{lemma3.1} and Corollary \ref{corollary3.2}, are the analogues of Lemma \ref{lemma2.1} and Corollary \ref{corollary2.2} from Section 2.  As one sees, very few changes occur.

\begin{lemma}\label{lemma3.1}
Let $f\ : \ [0,1]\rightarrow[0,+\infty)$ be given.  Suppose that $p\ : \ [0,1]\rightarrow(0,1]$ satisfies condition (H2.A).  Then, given any constant $q$ satisfying $1\le q$, for each $t\in[0,1]$ it holds that
\begin{equation}
\big(f(t)\big)^{\frac{p(t)}{q}}\ge\big(f(t)\big)^{\frac{p^-}{q}}-1.\notag
\end{equation}
\end{lemma}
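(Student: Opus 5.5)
The plan is a pointwise case split on whether $f(t)$ lies below or above $1$. Fix $t\in[0,1]$ and write $x:=f(t)\ge0$. Since $q\ge1$, the exponents satisfy
\begin{equation}
0<\frac{p^-}{q}\le\frac{p(t)}{q}\le\frac{p^+}{q}\le1,\notag
\end{equation}
by (H2.A). The map $s\mapsto x^{s}$ is nonincreasing in $s$ when $0\le x\le1$ and nondecreasing when $x\ge1$. So the comparison between $x^{p(t)/q}$ and $x^{p^-/q}$ depends only on the side of $1$ on which $x$ lies.

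\emph{Case $x\ge1$.} Here the exponent comparison gives directly
\begin{equation}
x^{\frac{p(t)}{q}}\ge x^{\frac{p^-}{q}}\ge x^{\frac{p^-}{q}}-1,\notag
\end{equation}
which is the claim.

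\emph{Case $0\le x<1$.} The monotonicity goes the wrong way, so instead I bound the right-hand side by something nonpositive. Since $p^-/q>0$, we have $0\le x^{p^-/q}\le1$, hence $x^{p^-/q}-1\le0$. On the other hand $x^{p(t)/q}\ge0$, and the inequality follows. (If $x=0$ both powers vanish, since the exponents are positive, so the claim reads $0\ge-1$.)

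There is no genuine obstacle. The only point needing care is the case $x<1$, where the variable power is smaller than the constant one. The constant $-1$ absorbs this discrepancy exactly because both powers lie in $[0,1]$. This is also why no factor like $2^{1-p^+/q}$, as in Lemma \ref{lemma2.1}, is needed. In the present regime all exponents are at most $1$, so the positive-$p^+$ correction used there is not required.
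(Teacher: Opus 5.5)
Your proof is correct, but it is not the argument the paper uses here. The paper avoids any case split. Since $p(t)/q\le1$, the map $s\mapsto s^{p(t)/q}$ is concave and vanishes at $0$, hence subadditive, and so
\[
\big(f(t)\big)^{\frac{p(t)}{q}}+1\ge\big(f(t)+1\big)^{\frac{p(t)}{q}}\ge\big(f(t)+1\big)^{\frac{p^-}{q}}\ge\big(f(t)\big)^{\frac{p^-}{q}},
\]
where the middle step uses that the base $f(t)+1$ is at least $1$. That route needs $p(t)\le q$, so it uses both $p^+\le1$ and $q\ge1$. In exchange it gives the marginally sharper intermediate bound $\big(f(t)\big)^{p(t)/q}\ge\big(f(t)+1\big)^{p^-/q}-1$.

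Your split on $f(t)\ge1$ versus $f(t)<1$ uses only $p(t)\ge p^->0$ and $q>0$. It is exactly the argument the paper itself gives later for the lower bound in Lemma~\ref{lemma4.1}. For the same reason, your closing explanation is misplaced: the factor $2^{1-p^+/q}$ is not avoided because the exponents are at most $1$. Run verbatim under (H2) with $1\le q<p^-$, your argument gives $\big(f(t)\big)^{p(t)/q}\ge\big(f(t)\big)^{p^-/q}-1$. This is stronger than Lemma~\ref{lemma2.1}, because $2^{1-p^+/q}<1$ there.
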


\begin{proof}
Note that $\displaystyle\frac{p(t)}{q}\le1$ for each $t\in[0,1]$.  Then, for each $0\le t\le 1$, keeping in mind that concave functions are subadditive, it follows that 
\begin{equation}\label{eq3.1}
\big(f(t)+1\big)^{\frac{p(t)}{q}}\le\big(f(t)\big)^{\frac{p(t)}{q}}+1.
\end{equation}
So, in light of \eqref{eq3.1} we deduce that
\begin{equation}\label{eq3.2}
\big(f(t)\big)^{\frac{p(t)}{q}}+1\ge\big(f(t)+1\big)^{\frac{p(t)}{q}}\ge\big(f(t)\big)^{\frac{p^-}{q}},
\end{equation}
for each $t\in[0,1]$.  Consequently, inequality \eqref{eq3.2} implies that
\begin{equation}
\big(f(t)\big)^{\frac{p(t)}{q}}\ge\big(f(t)\big)^{\frac{p^-}{q}}-1,\notag
\end{equation}
for each $t\in[0,1]$, as claimed.  And this completes the proof of the lemma.
\end{proof}

\begin{corollary}\label{corollary3.2}
Suppose that the hypotheses of Lemma \ref{lemma3.1} are true.  Then under the additional assumption that $f\in\mathscr{C}\big([0,1];[0,+\infty)\big)$ it holds that
\begin{equation}
\int_0^1\big(f(t)\big)^{\frac{p(t)}{q}}\ dt\ge\int_0^1\big(f(t)\big)^{\frac{p^-}{q}}\ dt-1.\notag
\end{equation}
\end{corollary}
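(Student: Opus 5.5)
This follows by integrating the pointwise inequality of Lemma \ref{lemma3.1} over $[0,1]$.

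Under the hypotheses of Lemma \ref{lemma3.1}, for each $t\in[0,1]$ we have
\begin{equation}
\big(f(t)\big)^{\frac{p(t)}{q}}\ge\big(f(t)\big)^{\frac{p^-}{q}}-1.\notag
\end{equation}
Before integrating, we check that both sides are integrable. Since $f$ is continuous and nonnegative on $[0,1]$ and $p$ is continuous, the maps $t\mapsto\big(f(t)\big)^{p(t)/q}$ and $t\mapsto\big(f(t)\big)^{p^-/q}$ are continuous on $[0,1]$ (with the convention $0^{r}=0$ for $r>0$). Continuity of the variable-exponent power follows from writing it as $\exp\big(\tfrac{p(t)}{q}\ln f(t)\big)$ where $f>0$. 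At zeros of $f$ we use that $p(t)/q\ge p^-/q>0$, so $f(t)^{p(t)/q}\le \max\{f(t)^{p^-/q},f(t)^{p^+/q}\}\to0$ as $f(t)\to0$. Both functions are therefore Riemann integrable.

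Integrating over $[0,1]$ and using monotonicity and linearity of the integral, together with $\int_0^1 1\,dt=1$, gives the claimed inequality. The only point requiring care is this continuity at zeros of $f$, which the bound above settles; everything else is routine.
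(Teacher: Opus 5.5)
Your proof is correct and is exactly the intended argument: the paper gives no separate proof of this corollary and treats it as the integrated form of the pointwise bound in Lemma~\ref{lemma3.1}, which is what you do. Your check that both integrands are continuous, including at zeros of $f$ via $p^->0$ and the bound by $\max\{f^{p^-/q},f^{p^+/q}\}$, is valid and makes explicit a step the paper leaves unstated.
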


We next seek to provide analogues of both Lemmata \ref{lemma2.3aaa}--\ref{lemma2.3} and Corollary \ref{corollary2.4}.  Mostly, these previous results analogise almost without alteration.  The one principal alteration is that in analogising Lemma \ref{lemma2.3} we no longer use the coercivity of the functional $u\mapsto(a*u)(1)$.  Instead we use the Harnack-like inequality $\displaystyle\min_{t\in[\alpha,\beta]}u(t)\ge\eta_0\Vert u\Vert_{\infty}$; this is similar to the approach taken earlier in \cite{goodrich11}, which treated the \emph{constant} exponent case in the concave setting.

\begin{lemma}\label{lemma3.3aaa}
Suppose that $u\in\partial\widehat{V}_{\rho}$ for some $\rho>0$.  Then
\begin{equation}
\Vert u\Vert_{\infty}\ge\left(\frac{\rho}{(b*\bm{1})(1)}\right)^{\frac{1}{p^+}}+\varepsilon(\rho,b),\notag
\end{equation}
where
\begin{equation}
\varepsilon(\rho,b):=\begin{cases} \left(\frac{\rho}{(b*\bm{1})(1)}\right)^{\frac{1}{p^-}}-\left(\frac{\rho}{(b*\bm{1})(1)}\right)^{\frac{1}{p^+}}\text{, }&0<\left(\frac{\rho}{(b*\bm{1})(1)}\right)^{\frac{1}{p^+}}<1\\ 0\text{, }&\left(\frac{\rho}{(b*\bm{1})(1)}\right)^{\frac{1}{p^+}}\ge1\end{cases}.\notag
\end{equation}
\end{lemma}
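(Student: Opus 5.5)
The plan is to repeat the argument of Lemma \ref{lemma2.3aaa} verbatim. That proof used only three facts: the monotonicity $u(s)\le\Vert u\Vert_{\infty}$, the identity $\rho=\left(b*u^{p(\cdot)}\right)(1)$ on $\partial\widehat{V}_{\rho}$, and the elementary comparison of powers $x^{p^+}\le x^{p(t)}\le x^{p^-}$ for $0\le x\le1$ and $x^{p^-}\le x^{p(t)}\le x^{p^+}$ for $x\ge1$. None of these uses $p>1$; they hold equally under (H2.A), since $0<p^-\le p(t)\le p^+\le 1$ still gives these orderings. Throughout, write $R:=\rho/(b*\bm{1})(1)$ and note that $s\mapsto s^{p(t)}$ is increasing for positive exponents, so $u^{p(\cdot)}\le\Vert u\Vert_{\infty}^{p(\cdot)}$ pointwise.

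First take the case $0<R^{1/p^+}<1$, equivalently $R<1$. If $\Vert u\Vert_{\infty}\le1$, then
\[
\rho\le\left(b*\Vert u\Vert_{\infty}^{p(\cdot)}\right)(1)\le\Vert u\Vert_{\infty}^{p^-}(b*\bm{1})(1),
\]
which gives $\Vert u\Vert_{\infty}\ge R^{1/p^-}=R^{1/p^+}+\varepsilon(\rho,b)$. If instead $\Vert u\Vert_{\infty}>1$, then since $R<1$ and $1/p^-\ge1/p^+$ we have $R^{1/p^-}\le R^{1/p^+}<1<\Vert u\Vert_{\infty}$, and the bound again holds.

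Next take the case $R^{1/p^+}\ge1$. Suppose $\Vert u\Vert_{\infty}<1$. The same estimate gives $\Vert u\Vert_{\infty}\ge R^{1/p^-}\ge1$, a contradiction, so $\Vert u\Vert_{\infty}\ge1$. Then
\[
\rho\le\Vert u\Vert_{\infty}^{p^+}(b*\bm{1})(1),
\]
so $\Vert u\Vert_{\infty}\ge R^{1/p^+}$, which is the claim since $\varepsilon(\rho,b)=0$ in this case.

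The only point needing care is confirming that no step secretly used $p>1$, for instance a convexity or Lemma \ref{lemma2.1}-type inequality. None does: each step is just monotonicity of powers in base and exponent. Hence there is no real obstacle.
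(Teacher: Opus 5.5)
Your proposal is correct and is essentially the paper's own proof. The paper simply says the argument of Lemma~\ref{lemma2.3aaa} carries over unchanged, and your case analysis reproduces that argument. You also correctly note that it uses only monotonicity of $x\mapsto x^{p}$ in base and exponent for positive exponents, so nothing depends on $p>1$.
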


\begin{proof}
The proof is unchanged from Lemma \ref{lemma2.3aaa}.
\end{proof}

\begin{lemma}\label{lemma3.3}
Assume that, for some $q>1$,
\begin{equation}
b^{\frac{1}{1-q}}\in L^1\big((0,1]\big).\notag
\end{equation}
For each $\rho>0$, if $u\in\widehat{V}_{\rho}$, then
\begin{equation}
\Vert u\Vert_{\infty}<\frac{1}{\eta_0}(\beta-\alpha)^{-\frac{q}{p^-}}\left[\rho^{\frac{1}{q}}\left(\left(b^{\frac{1}{1-q}}*\bm{1}\right)(1)\right)^{\frac{q-1}{q}}+1\right]^{\frac{q}{p^-}}.\notag
\end{equation}
Furthermore, if $u\in\partial\widehat{V}_{\rho}$, then
\begin{equation}
\Vert u\Vert_{\infty}\le\frac{1}{\eta_0}(\beta-\alpha)^{-\frac{q}{p^-}}\left[\rho^{\frac{1}{q}}\left(\left(b^{\frac{1}{1-q}}*\bm{1}\right)(1)\right)^{\frac{q-1}{q}}+1\right]^{\frac{q}{p^-}}.\notag
\end{equation}
\end{lemma}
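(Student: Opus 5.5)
The plan is to repeat the proof of Lemma \ref{lemma2.3}, with two changes. Lemma \ref{lemma3.1} replaces Lemma \ref{lemma2.1}, and the Harnack-type condition $\min_{t\in[\alpha,\beta]}u(t)\ge\eta_0\Vert u\Vert_{\infty}$ replaces the coercivity functional $(\bm{1}*u)(1)\ge C_0\Vert u\Vert_\infty$. The Jensen step is no longer available, since $p^-/q<1$.

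Fix $\rho>0$ and $u\in\widehat{V}_{\rho}$, so that $\left(b*u^{p(\cdot)}\right)(1)<\rho$. Write $B:=\left(b^{\frac{1}{1-q}}*\bm{1}\right)(1)$. Since $q>1$, the reverse H\"{o}lder inequality applies with exponents $1/q\in(0,1)$ and $1/(1-q)<0$, exactly as in \eqref{eq2.12}. This gives
\begin{equation}
\left(b*u^{p(\cdot)}\right)(1)\ge B^{1-q}\left(\int_0^1\big(u(s)\big)^{\frac{p(s)}{q}}\ ds\right)^{q}.\notag
\end{equation}
Next I bound the inner integral from below. By Corollary \ref{corollary3.2} (valid for any $q\ge1$),
\begin{equation}
\int_0^1u^{\frac{p(s)}{q}}\ ds\ge\int_0^1u^{\frac{p^-}{q}}\ ds-1.\notag
\end{equation}
Discarding the integral over $[0,1]\setminus[\alpha,\beta]$, which is nonnegative, and using $u(s)\ge\eta_0\Vert u\Vert_\infty$ on $[\alpha,\beta]$ (because $u\in\mathscr{K}$), I get
\begin{equation}
\int_0^1u^{\frac{p(s)}{q}}\ ds\ge(\beta-\alpha)\eta_0^{\frac{p^-}{q}}\Vert u\Vert_{\infty}^{\frac{p^-}{q}}-1=:X.\notag
\end{equation}

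The one delicate point is raising this lower bound to the power $q$, which requires $X\ge0$. I would split into two cases.

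If $X<0$, then $\Vert u\Vert_\infty<\eta_0^{-1}(\beta-\alpha)^{-q/p^-}$. This is already below the claimed bound, because the bracket $\rho^{1/q}B^{(q-1)/q}+1$ exceeds $1$ and so its power $q/p^->0$ exceeds $1$.

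If $X\ge0$, then monotonicity of $x\mapsto x^q$ gives $\rho>B^{1-q}X^q$. Taking $q$-th roots yields
\begin{equation}
(\beta-\alpha)\eta_0^{\frac{p^-}{q}}\Vert u\Vert_\infty^{\frac{p^-}{q}}<\rho^{\frac1q}B^{\frac{q-1}{q}}+1.\notag
\end{equation}
Raising both sides to the power $q/p^-$ and dividing by $\eta_0(\beta-\alpha)^{q/p^-}$ gives exactly the stated strict inequality.

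For $u\in\partial\widehat{V}_{\rho}$ the same chain runs from $\left(b*u^{p(\cdot)}\right)(1)=\rho$. The only change is that the first inequality becomes non-strict, and this propagates to give the non-strict bound.

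I expect no serious obstacle beyond the sign issue above. Note that Lemma \ref{lemma2.3} passes over this same issue silently, and the case split handles it.
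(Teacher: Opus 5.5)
Your proposal is correct and follows the paper's proof essentially step for step: reverse H\"{o}lder, then Lemma~\ref{lemma3.1} (via Corollary~\ref{corollary3.2}), then the Harnack-type bound $u\ge\eta_0\Vert u\Vert_{\infty}$ on $[\alpha,\beta]$ in place of coercivity, and finally solving for $\Vert u\Vert_{\infty}$. The only difference is your explicit case split on the sign of $X$, which fills in a point the paper's display \eqref{eq3.3} passes over when it raises the possibly negative bracket $-1+\int_0^1 u^{p^-/q}\,ds$ to the power $q$.
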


\begin{proof}
Keeping in mind that $u\in\widehat{V}_{\rho}$, by means of the reverse H\"{o}lder inequality we first calculate
\begin{equation}\label{eq3.3}
\begin{split}
\rho>\left(b*u^{p(\cdot)}\right)(1)&=\int_0^1b(1-s)\big(u(s)\big)^{p(s)}\ ds\\
&\ge\left(\int_0^1\big(b(1-s)\big)^{\frac{1}{1-q}}\ ds\right)^{1-q}\left(\int_0^1\big(u(s)\big)^{\frac{p(s)}{q}}\ ds\right)^{q}\\
&\ge\left(\left(b^{\frac{1}{1-q}}*\bm{1}\right)(1)\right)^{1-q}\left[-1+\int_0^1\big(u(s)\big)^{\frac{p^-}{q}}\ ds\right]^q,
\end{split}
\end{equation}
where the second inequality follows from Lemma \ref{lemma3.1}.  Now, using that $u\in\mathscr{K}$ we recall that
\begin{equation}\label{eq3.4}
\min_{t\in[\alpha,\beta]}u(t)\ge\eta_0\Vert u\Vert_{\infty}.
\end{equation}
So, \eqref{eq3.3}--\eqref{eq3.4} imply that
\begin{equation}\label{eq3.5}
\begin{split}
\rho&>\left(\left(b^{\frac{1}{1-q}}*\bm{1}\right)(1)\right)^{1-q}\left[-1+\int_0^1\big(u(s)\big)^{\frac{p^-}{q}}\ ds\right]^q\\
&\ge\left(\left(b^{\frac{1}{1-q}}*\bm{1}\right)(1)\right)^{1-q}\left[-1+\int_{\alpha}^{\beta}\big(\eta_0\Vert u\Vert_{\infty}\big)^{\frac{p^-}{q}}\ ds\right]^q\\
&=\left(\left(b^{\frac{1}{1-q}}*\bm{1}\right)(1)\right)^{1-q}\left[-1+(\beta-\alpha)\big(\eta_0\Vert u\Vert_{\infty}\big)^{\frac{p^-}{q}}\right]^q.
\end{split}
\end{equation}
Therefore, from inequality \eqref{eq3.5} we conclude that
\begin{equation}
\Vert u\Vert_{\infty}<\frac{1}{\eta_0}(\beta-\alpha)^{-\frac{q}{p^-}}\left[\rho^{\frac{1}{q}}\left(\left(b^{\frac{1}{1-q}}*\bm{1}\right)(1)\right)^{\frac{q-1}{q}}+1\right]^{\frac{q}{p^-}},\notag
\end{equation}
as claimed.

On the other hand, if $u\in\partial\widehat{V}_{\rho}$, then the only change to the preceding calculations is that in inequalities \eqref{eq3.3} and \eqref{eq3.5} the strict inequalities change to non-strict inequalities.  And this completes the proof.
\end{proof}

\begin{corollary}\label{corollary3.4}
For each $\rho>0$ the set $\widehat{V}_{\rho}$ is bounded.
\end{corollary}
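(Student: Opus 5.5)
This follows directly from Lemma \ref{lemma3.3}, in the same way that Corollary \ref{corollary2.4} follows from Lemma \ref{lemma2.3}. The one point requiring care is the integrability hypothesis of Lemma \ref{lemma3.3}: the corollary is stated without assuming anything about $b$. The plan is to fix $\rho>0$ and produce a finite bound $M$ with $\Vert u\Vert_{\infty}<M$ for every $u\in\widehat{V}_{\rho}$.

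\textbf{Case 1: the hypothesis of Lemma \ref{lemma3.3} holds.} Suppose some $q>1$ gives $b^{\frac{1}{1-q}}\in L^1\big((0,1]\big)$. Then $\big(b^{\frac{1}{1-q}}*\bm{1}\big)(1)$ is finite. It is also strictly positive, since $b\ge0$ with $(b*\bm{1})(1)\neq0$ by (H1.2), and the convention is $b^{\frac{1}{1-q}}=+\infty$ wherever $b=0$. Lemma \ref{lemma3.3} then gives the explicit finite bound
\begin{equation}
\Vert u\Vert_{\infty}<\frac{1}{\eta_0}(\beta-\alpha)^{-\frac{q}{p^-}}\left[\rho^{\frac{1}{q}}\left(\left(b^{\frac{1}{1-q}}*\bm{1}\right)(1)\right)^{\frac{q-1}{q}}+1\right]^{\frac{q}{p^-}},\notag
\end{equation}
which depends only on $\rho$, $q$, $b$, $\eta_0$, $\alpha$, $\beta$ and $p^-$. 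So $\widehat{V}_{\rho}$ is bounded, and this is the intended route, exactly parallel to Section 2.

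\textbf{Case 2: no such $q$ is assumed.} Here I would use the Harnack-type cone property without Hölder. For $u\in\mathscr{K}$ and $s\in[\alpha,\beta]$ we have $u(s)\ge\eta_0\Vert u\Vert_{\infty}$. If $\eta_0\Vert u\Vert_{\infty}\ge1$, then $u(s)^{p(s)}\ge\big(\eta_0\Vert u\Vert_{\infty}\big)^{p^-}$ on $[\alpha,\beta]$, because $p(s)\ge p^->0$ and the base is at least $1$. This yields
\begin{equation}
\rho>\left(b*u^{p(\cdot)}\right)(1)\ge\big(\eta_0\Vert u\Vert_{\infty}\big)^{p^-}\int_{\alpha}^{\beta}b(1-s)\ ds.\notag
\end{equation}
This gives a bound provided $\int_{\alpha}^{\beta}b(1-s)\ ds>0$. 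That holds under the paper's standing assumption that $b$ is almost everywhere positive (stated in the introduction). Combining the two alternatives, $\Vert u\Vert_{\infty}\le\max\Big\{\eta_0^{-1},\ \eta_0^{-1}\Big(\rho\big/\int_{\alpha}^{\beta}b(1-s)\,ds\Big)^{1/p^-}\Big\}$ for every $u\in\widehat{V}_{\rho}$.

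The only real obstacle is this hypothesis bookkeeping. The cleanest write-up states the corollary under the hypothesis of Lemma \ref{lemma3.3}, as in Section 2, and invokes that lemma; the Case 2 argument serves as a fallback. Everything else is immediate.
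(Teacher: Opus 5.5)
Your Case 1 is exactly the paper's proof, which is just the remark that the corollary is immediate from Lemma \ref{lemma3.3}. The paper tacitly inherits that lemma's hypothesis $b^{\frac{1}{1-q}}\in L^1\big((0,1]\big)$ for some $q>1$, so your bookkeeping remark is well taken.

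Your Case 2 is a correct and genuinely different argument. You discard the part of the convolution outside $[\alpha,\beta]$ and use only the Harnack-type cone inequality. This avoids the reverse H\"{o}lder inequality, the auxiliary exponent $q$, and Lemma \ref{lemma3.1}. It does not even use $p\le 1$, so the same two lines work in all three regimes of the paper.

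It is also more than a fallback. Its only requirement, $\int_{\alpha}^{\beta}b(1-s)\,ds>0$, is implied by the Case 1 hypothesis: integrability of $b^{\frac{1}{1-q}}$ with $\frac{1}{1-q}<0$ forces $b>0$ a.e. Moreover, the reverse H\"{o}lder inequality on $[\alpha,\beta]$ gives
$\int_{\alpha}^{\beta}b(1-s)\,ds\cdot\big((b^{\frac{1}{1-q}}*\bm{1})(1)\big)^{q-1}\ge(\beta-\alpha)^{q}$.
Combined with $x+1\ge\max\{x,1\}$ and $\beta-\alpha\le 1$, this shows that your constant $\eta_0^{-1}\max\big\{1,\big(\rho/\int_{\alpha}^{\beta}b(1-s)\,ds\big)^{1/p^-}\big\}$ never exceeds the paper's $\overline{M}_{\rho}$.

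So, for this corollary, the paper's route buys nothing beyond producing the particular constant $\overline{M}_{\rho}$ that it then uses in Theorem \ref{theorem3.6}. Your constant could replace it there. That would give weaker hypotheses on $f$, a sharper localisation, and no integrability assumption on $b^{\frac{1}{1-q}}$. The positivity of $\int_{\alpha}^{\beta}b(1-s)\,ds$ is forced anyway by condition (1) of that theorem.
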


\begin{proof}
This is immediate from Lemma \ref{lemma3.3}.
\end{proof}

We note that Lemma \ref{lemma2.6} holds without alteration in case $0<p(x)\le1$.  Therefore, we proceed directly to the analogy of Theorem \ref{theorem2.8}.  In both the statement and proof of Theorem \ref{theorem3.6} we denote by $\overline{M}_{\rho}$ the number
\begin{equation}
\overline{M}_{\rho}:=\frac{1}{\eta_0}(\beta-\alpha)^{-\frac{q}{p^-}}\left[\rho^{\frac{1}{q}}\left(\left(b^{\frac{1}{1-q}}*\bm{1}\right)(1)\right)^{\frac{q-1}{q}}+1\right]^{\frac{q}{p^-}},\notag
\end{equation}
for any real number $\rho>0$.  Note that the number $G^M$ continues to have the same meaning as it did in Section 2.

\begin{theorem}\label{theorem3.6}
Suppose for some $q>1$ that $b^{\frac{1}{1-q}}\in L^1\big((0,1]\big)$.  In addition, suppose that the numbers $\rho_1$ and $\rho_2$ from condition (H1.3) satisfy the following inequalities.
\vskip0.2cm
\begin{enumerate}
\item $\displaystyle\left[\left(\frac{\lambda f_{[\alpha,\beta]\times\left[\eta_0m_{\rho_1},\overline{M}_{\rho_1}\right]}^{m}}{A\left(\rho_1\right)}\max_{\tau\in[0,1]}\int_{\alpha}^{\beta}G(\tau,s)\ ds\right)^{p^-}-1\right]\int_{\alpha}^{\beta}b(1-s)\ ds>\frac{\rho_1}{\eta_0^{p^+}}$
\vskip0.2cm
\item $\displaystyle\max\left\{\left(\frac{\lambda f_{[0,1]\times\left[0,\overline{M}_{\rho_2}\right]}^{M}G^M}{A\left(\rho_2\right)}\right)^{p^+},\left(\frac{\lambda f_{[0,1]\times\left[0,\overline{M}_{\rho_2}\right]}^{M}G^M}{A\left(\rho_2\right)}\right)^{p^-}\right\}<\frac{\rho_2}{(b*\bm{1})(1)}$
\end{enumerate}
\vskip0.2cm
If each of conditions (H1), (H2.A), and (H3) holds, then problem \eqref{eq1.1}, subject to the boundary data encoded by the Green's function $G$, has at least one positive solution $u_0\in\widehat{V}_{\rho_2}\setminus\overline{\widehat{V}_{\rho_1}}$ satisfying the localisation
\begin{equation}
\left(\frac{\rho_1}{(b*\bm{1})(1)}\right)^{\frac{1}{p^+}}+\varepsilon\left(\rho_1,b\right)\le\Vert u\Vert_{\infty}\le\frac{1}{\eta_0}(\beta-\alpha)^{-\frac{q}{p^-}}\left[\rho_2^{\frac{1}{q}}\left(\left(b^{\frac{1}{1-q}}*\bm{1}\right)(1)\right)^{\frac{q-1}{q}}+1\right]^{\frac{q}{p^-}}.\notag
\end{equation}
\end{theorem}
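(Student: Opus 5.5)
The plan is to mirror the proof of Theorem \ref{theorem2.8}, with three substitutions: Lemma \ref{lemma3.1} in place of Lemma \ref{lemma2.1}, Lemmata \ref{lemma3.3aaa}--\ref{lemma3.3} in place of Lemmata \ref{lemma2.3aaa}--\ref{lemma2.3}, and $\overline{M}_{\rho}$ in place of $M_{\rho}$. First I note that $\bm{0}\in\widehat{V}_{\rho_2}$, that $\widehat{V}_{\rho_2}$ is bounded by Corollary \ref{corollary3.4}, and that $\bm{1}\in\mathscr{K}$. I also note that Lemma \ref{lemma2.6} still gives complete continuity of $T$ on $\overline{\widehat{V}_{\rho_2}}\setminus\widehat{V}_{\rho_1}$, with image in $\mathscr{K}$, because $A>0$ on $[\rho_1,\rho_2]$ by (H1.3). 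Hence Lemma \ref{lemma2.4aaa} applies.

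\textbf{Index zero on $\widehat{V}_{\rho_1}$.} Suppose, for contradiction, that $u=Tu+\mu\bm{1}$ for some $u\in\partial\widehat{V}_{\rho_1}$ and $\mu>0$. Since $Tu\in\mathscr{K}$, the chain of estimates in \eqref{eq2.15} gives
\[
\rho_1\ge\int_\alpha^\beta b(1-t)\big(\eta_0\Vert Tu\Vert_\infty\big)^{p(t)}\,dt .
\]
Because $\eta_0\le1$, we have $\eta_0^{p(t)}\ge\eta_0^{p^+}$. Lemma \ref{lemma3.1} with $q=1$ then gives $\Vert Tu\Vert_\infty^{p(t)}\ge\Vert Tu\Vert_\infty^{p^-}-1$, so
\[
\rho_1\ge\eta_0^{p^+}\big[\Vert Tu\Vert_\infty^{p^-}-1\big]\int_\alpha^\beta b(1-s)\,ds .
\]
Next I bound $\Vert Tu\Vert_\infty$ from below exactly as in \eqref{eq2.16}, using $A\big((b*u^{p(\cdot)})(1)\big)=A(\rho_1)$. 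For $t\in[\alpha,\beta]$ I need the range bound
\[
\eta_0 m_{\rho_1}\le u(t)\le\overline{M}_{\rho_1},
\]
where the lower bound comes from Lemma \ref{lemma3.3aaa} together with the cone condition, and the upper bound from Lemma \ref{lemma3.3}. Substituting into the previous inequality contradicts condition (1). Part (1) of Lemma \ref{lemma2.4aaa} therefore gives $i_{\mathscr{K}}(T,\widehat{V}_{\rho_1})=0$.

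\textbf{Index one on $\widehat{V}_{\rho_2}$.} Suppose, for contradiction, that $Tu=\mu u$ for some $u\in\partial\widehat{V}_{\rho_2}$ and $\mu\ge1$. Then
\[
\rho_2\le\int_0^1 b(1-t)\big(\mu u(t)\big)^{p(t)}\,dt=\int_0^1 b(1-t)\big((Tu)(t)\big)^{p(t)}\,dt .
\]
I split into cases on $\Vert Tu\Vert_\infty$. If $\Vert Tu\Vert_\infty\ge1$, I bound $\Vert Tu\Vert_\infty^{p(t)}\le\Vert Tu\Vert_\infty^{p^+}$; if $\Vert Tu\Vert_\infty<1$, I bound it by $\Vert Tu\Vert_\infty^{p^-}$. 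Using $u\le\overline{M}_{\rho_2}$ from Lemma \ref{lemma3.3}, I then get
\[
\Vert Tu\Vert_\infty\le\frac{\lambda f^{M}_{[0,1]\times[0,\overline{M}_{\rho_2}]}G^M}{A(\rho_2)} .
\]
So in either case $\rho_2\le(b*\bm{1})(1)\cdot\max\{\cdot^{p^+},\cdot^{p^-}\}$, which contradicts condition (2) after dividing by $(b*\bm{1})(1)>0$. Part (2) of Lemma \ref{lemma2.4aaa} gives index $1$. Part (4) then yields a fixed point $u_0\in\widehat{V}_{\rho_2}\setminus\overline{\widehat{V}_{\rho_1}}$. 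The localisation follows from Lemma \ref{lemma3.3aaa} applied on $\partial\widehat{V}_{\rho_1}$ and from Lemma \ref{lemma3.3}; the bound transfers from the boundary to $u_0$ because the bounds in Lemmata \ref{lemma3.3aaa}--\ref{lemma3.3} are monotone in $\rho$ and $u_0$ sits between the two levels.

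The main point needing care is the lower bound in the index-zero step. There the exponent $p\le1$ reverses the convexity used in Section 2, so the power inequality of Lemma \ref{lemma2.1} (with its factor $2^{1-p^+}$) is not available. Its role is taken by the subadditivity inequality of Lemma \ref{lemma3.1}, which carries no constant factor, and this is why condition (1) omits the factor $2^{1-p^+}$. The rest is routine bookkeeping.
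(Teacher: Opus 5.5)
Your proposal is correct and follows the paper's proof essentially verbatim. Like you, the paper reuses the proof of Theorem \ref{theorem2.8} and changes only the index-zero step: there Lemma \ref{lemma3.1} with $q=1$ replaces Lemma \ref{lemma2.1}, which is why condition (1) has no factor $2^{1-p^+}$, and the index-one step is unchanged apart from $\overline{M}_{\rho_2}$ replacing $M_{\rho_2}$. Your argument that the localisation passes from the boundary levels to $u_0$, via monotonicity of $\rho\mapsto m_\rho$ and $\rho\mapsto\overline{M}_{\rho}$, is valid and fills in a detail the paper leaves implicit.
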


\begin{proof}
The only alteration to the proof of Theorem \ref{theorem2.8} is the part in which we appeal to part (1) of Lemma \ref{lemma2.4aaa}.  In particular, note that by Lemma \ref{lemma3.1} (with the selection $q=1$ in the statement of the lemma) we obtain that
\begin{equation}\label{eq3.6}
\begin{split}
\rho_1&=\left(b*u^{p(\cdot)}\right)(1)\\
&=\left(b*(Tu+\mu\bm{1})^{p(\cdot)}\right)(1)\\
&\ge\left(b*Tu^{p(\cdot)}\right)(1)\\
&\ge\int_{\alpha}^{\beta}b(1-t)\left(\eta_0\Vert Tu\Vert_{\infty}\right)^{p(t)}\ dt\\
&\ge\eta_0^{p^+}\int_{\alpha}^{\beta}b(1-t)\Vert Tu\Vert_{\infty}^{p(t)}\ dt\\
&\ge\eta_0^{p^+}\int_{\alpha}^{\beta}b(1-t)\left(\Vert Tu\Vert_{\infty}^{p^-}-1\right)\ dt\\
&=\eta_0^{p^+}\left[\Vert Tu\Vert_{\infty}^{p^-}-1\right]\int_{\alpha}^{\beta}b(1-t)\ dt\\
&\ge\eta_0^{p^+}\left[\left(\frac{\lambda f_{[\alpha,\beta]\times\left[\eta_0m_{\rho_1},\overline{M}_{\rho_1}\right]}^{m}}{A\left(\rho_1\right)}\max_{\tau\in[0,1]}\int_{\alpha}^{\beta}G(\tau,s)\ ds\right)^{p^-}-1\right]\int_{\alpha}^{\beta}b(1-t)\ dt.
\end{split}
\end{equation}
But then inequality \eqref{eq3.6}, together with condition (1) in the statement of the theorem, implies a contradiction.  Since the second part of the proof is unchanged from the second part of the proof of Theorem \ref{theorem2.8}, this completes the proof of the theorem.
\end{proof}

\begin{remark}\label{remark3.7}
We emphasise that the results of this section demonstrate the broadness of the techniques introduced in this paper.  In the constant exponent case, i.e., $p(x)\equiv p$, it is very unusual to treat simultaneously the cases $0<p\le1$ and $p>1$, much less in the variable exponent case.  Thus, this demonstrates the power of using the fundamental inequalities embodied by Lemmata \ref{lemma2.1} and \ref{lemma3.1}, together with the specialised cone $\mathscr{K}$.
\end{remark}

\section{Existence Theory for \eqref{eq1.1} in Case $0<p^-< 1 < p^+$}

As mentioned in Section 1, we can use our results from Sections 2 and 3 to accomodate the situation in which $p^-<1<p^+$.  For this reason, we will use the following condition in this section.
\begin{list}{}{\setlength{\leftmargin}{.5in}\setlength{\rightmargin}{0in}}
\item[\textbf{H2.B:}] The function $p\ : \ [0,1]\rightarrow(0,+\infty)$ is continuous, and there exist real numbers $p^-$ and $p^+$, which satisfy the inequality
\begin{equation}
0<p^-<1<p^+<+\infty,\notag
\end{equation}
such that, for each $t\in[0,1]$,
\begin{equation}
p^-\le p(t)\le p^+.\notag
\end{equation}
\end{list}
When this regime occurs, the variable growth can freely switch from a concave-type regime to a convex-type regime and vice versa, thereby permitting a very general theoretical framework.  In this brief section, we show how the the results of Sections 2 and 3 facilitate this unification.

\begin{lemma}\label{lemma4.1}
Let $f\ : \ [0,1]\rightarrow[0,+\infty)$ be given.  Suppose that $p\ : \ [0,1]\rightarrow[0,+\infty)$ satisfies condition (H2.B).  Then, given any $q\in\big[1,p^+\big)$, for each $t\in[0,1]$ it holds that
\begin{equation}
\big(f(t)\big)^{\frac{p^-}{q}}-1\le\big(f(t)\big)^{\frac{p(t)}{q}}<2^{\frac{p^+}{q}-1}\left(\big(f(t)\big)^{\frac{p^+}{q}}+1\right).\notag
\end{equation}
\end{lemma}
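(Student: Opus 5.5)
Both inequalities are pointwise, so I fix $t\in[0,1]$, write $x:=f(t)\ge0$ and $r:=p(t)/q$, and split according to whether $x\le1$ or $x>1$. The left inequality is the analogue of Lemma \ref{lemma3.1}, and the right inequality is an upper counterpart of Lemma \ref{lemma2.1}.

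\emph{Lower bound.} If $x\ge1$, then $p(t)\ge p^-$ gives $x^{r}\ge x^{p^-/q}\ge x^{p^-/q}-1$ directly. If $0\le x<1$, then $x^{p^-/q}\le1$, so $x^{p^-/q}-1\le0\le x^{r}$. Neither case uses $q\ge1$, so the subadditivity argument of Lemma \ref{lemma3.1} is not needed; it would in any case fail when $p(t)/q>1$. This case split is enough.

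\emph{Upper bound.} If $x\le1$, then $x^{r}\le1$. Since $p^+/q>1$, we have $2^{p^+/q-1}>1$, and hence
\[
x^{r}\le1<2^{\frac{p^+}{q}-1}\le2^{\frac{p^+}{q}-1}\left(x^{\frac{p^+}{q}}+1\right).
\]
This gives strict inequality. If $x>1$, then $p(t)\le p^+$ gives $x^{r}\le x^{p^+/q}$. Because $p^+/q>1$, we have $2^{p^+/q-1}>1$, and so
\[
x^{\frac{p^+}{q}}<2^{\frac{p^+}{q}-1}x^{\frac{p^+}{q}}<2^{\frac{p^+}{q}-1}\left(x^{\frac{p^+}{q}}+1\right).
\]
Strict inequality holds again. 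Here $q<p^+$ is exactly what makes the exponent $p^+/q-1$ positive. Alternatively, convexity of $s\mapsto s^{p^+/q}$ gives $(x+1)^{p^+/q}\le2^{p^+/q-1}(x^{p^+/q}+1)$, and we also have $x^r\le(x+1)^{p^+/q}$ since $x+1\ge1$. I would mention this as the natural source of the constant, but strictness still needs the separate argument above.

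\emph{Main difficulty.} The only delicate point is the strictness of the right-hand inequality, in particular at $x=0$. There the right side equals $2^{p^+/q-1}>0=x^r$ because $p(t)>0$, so strictness holds in every case. Nothing else appears to require care. The argument does not use continuity of $f$.
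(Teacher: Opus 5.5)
Your proof is correct. Your lower bound is the paper's own argument: the paper also splits into $f(t)\ge1$ and $f(t)<1$ instead of reusing the subadditivity of Lemma~\ref{lemma3.1}.

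For the upper bound, the paper takes the route you list only as an alternative. In your notation $x=f(t)$, $r=p(t)/q$, it uses the single chain
\[
x^{r}<(x+1)^{r}\le(x+1)^{\frac{p^+}{q}}\le2^{\frac{p^+}{q}-1}\left(x^{\frac{p^+}{q}}+1\right).
\]
The three steps are strict monotonicity of $s\mapsto s^{r}$, monotonicity in the exponent for the base $x+1\ge1$, and convexity of $s\mapsto s^{p^+/q}$; the last step is where $q<p^+$ is used. Contrary to your remark, this chain needs no separate argument for strictness. Its first link is already strict because $r\ge p^-/q>0$.

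What the convexity chain buys is a natural explanation of the constant $2^{p^+/q-1}$, parallel to Lemma~\ref{lemma2.1}. What your case split buys is a sharper statement. It actually shows $x^{r}\le\max\{1,x^{p^+/q}\}$, and hence $x^{r}<1+x^{p^+/q}$ for every $x\ge0$. So the factor $2^{p^+/q-1}\ge1$ only weakens the estimate, and you use $2^{p^+/q-1}>1$ merely as a convenient source of strictness.
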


\begin{proof}
First note that, for each $t\in[0,1]$,
\begin{equation}\label{eq4.1}
\big(f(t)\big)^{\frac{p(t)}{q}}<\big(f(t)+1\big)^{\frac{p(t)}{q}}\le\big(f(t)+1\big)^{\frac{p^+}{q}}\le2^{\frac{p^+}{q}-1}\left(\big(f(t)\big)^{\frac{p^+}{q}}+1\right).
\end{equation}
Thus, inequality \eqref{eq4.1} establishes the desired upper bound.

On the other hand, suppose that $f(t)\ge1$ for some fixed $t\in[0,1]$.  Then
\begin{equation}\label{eq4.2}
\big(f(t)\big)^{\frac{p(t)}{q}}\ge\big(f(t)\big)^{\frac{p^-}{q}}>\big(f(t)\big)^{\frac{p^-}{q}}-1.
\end{equation}
If instead $f(t)<1$ for some fixed $t\in[0,1]$, then since
\begin{equation}
\big(f(t)\big)^{\frac{p^-}{q}}-1<0,\notag
\end{equation}
it follows trivially that
\begin{equation}\label{eq4.3}
\big(f(t)\big)^{\frac{p(t)}{q}}>\big(f(t)\big)^{\frac{p^-}{q}}-1.
\end{equation}
Combining inequalities \eqref{eq4.2}--\eqref{eq4.3} we arrive at the desired lower bound.  And this completes the proof.
\end{proof}

\begin{lemma}\label{lemma4.2}
Suppose that $u\in\partial\widehat{V}_{\rho}$ for some $\rho>0$.  Then
\begin{equation}
\Vert u\Vert_{\infty}\ge\left(\frac{\rho}{(b*\bm{1})(1)}\right)^{\frac{1}{p^+}}+\varepsilon(\rho,b),\notag
\end{equation}
where
\begin{equation}
\varepsilon(\rho,b):=\begin{cases} \left(\frac{\rho}{(b*\bm{1})(1)}\right)^{\frac{1}{p^-}}-\left(\frac{\rho}{(b*\bm{1})(1)}\right)^{\frac{1}{p^+}}\text{, }&0<\left(\frac{\rho}{(b*\bm{1})(1)}\right)^{\frac{1}{p^+}}<1\\ 0\text{, }&\left(\frac{\rho}{(b*\bm{1})(1)}\right)^{\frac{1}{p^+}}\ge1\end{cases}.\notag
\end{equation}
\end{lemma}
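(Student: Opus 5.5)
The plan is to repeat the argument of Lemma \ref{lemma2.3aaa} essentially verbatim. That proof never used $p^->1$. It only used the pointwise bounds
\begin{equation}
\Vert u\Vert_{\infty}^{p(t)}\le\Vert u\Vert_{\infty}^{p^-}\text{ when }\Vert u\Vert_{\infty}\le1,\qquad\Vert u\Vert_{\infty}^{p(t)}\le\Vert u\Vert_{\infty}^{p^+}\text{ when }\Vert u\Vert_{\infty}\ge1,\notag
\end{equation}
which hold for any exponent with $0<p^-\le p(t)\le p^+$. They also hold under (H2.B). Throughout, write $\kappa:=\rho/(b*\bm{1})(1)$, which is finite and positive by (H1.2). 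Start from $u\in\partial\widehat{V}_{\rho}$, so that $\rho=(b*u^{p(\cdot)})(1)\le(b*\Vert u\Vert_{\infty}^{p(\cdot)})(1)$. This uses $0\le u\le\Vert u\Vert_{\infty}$ and the monotonicity of $s\mapsto s^{p(t)}$ on $[0,+\infty)$ for each fixed $t$.

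\textbf{Case $0<\kappa^{1/p^+}<1$.} Split according to the size of $\Vert u\Vert_\infty$.
\begin{enumerate}
\item If $\Vert u\Vert_{\infty}\le1$, the first pointwise bound gives $\rho\le\Vert u\Vert_{\infty}^{p^-}(b*\bm{1})(1)$. Hence $\Vert u\Vert_{\infty}\ge\kappa^{1/p^-}=\kappa^{1/p^+}+\varepsilon(\rho,b)$.
\item If $\Vert u\Vert_{\infty}>1$, note that $\kappa<1$ and $p^-\le p^+$ give $\kappa^{1/p^-}\le\kappa^{1/p^+}<1$. Therefore $\Vert u\Vert_{\infty}>1>\kappa^{1/p^-}$, which is again the claimed bound.
\end{enumerate}

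\textbf{Case $\kappa^{1/p^+}\ge1$.} Here $\kappa\ge1$, so $\kappa^{1/p^-}\ge\kappa^{1/p^+}\ge1$. First rule out $\Vert u\Vert_{\infty}<1$: if it held, the first pointwise bound would give $\Vert u\Vert_{\infty}\ge\kappa^{1/p^-}\ge1$, a contradiction. So $\Vert u\Vert_{\infty}\ge1$. The second pointwise bound then gives $\rho\le\Vert u\Vert_{\infty}^{p^+}(b*\bm{1})(1)$, that is, $\Vert u\Vert_{\infty}\ge\kappa^{1/p^+}$. Since $\varepsilon(\rho,b)=0$ in this case, this is the claim.

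There is no real obstacle here. The only point to verify is that nothing in the earlier proof used the location of $1$ relative to $p^\pm$. It does not: the comparisons of $\kappa^{1/p^-}$ with $\kappa^{1/p^+}$ depend only on $p^-\le p^+$ and on whether $\kappa$ is below or above $1$. So the proof of Lemma \ref{lemma4.2} is the same as the proof of Lemma \ref{lemma2.3aaa} (equivalently, that of Lemma \ref{lemma3.3aaa}).
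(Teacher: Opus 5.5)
Your proposal is correct and follows the paper's approach: the paper's proof of Lemma \ref{lemma4.2} simply says it is the same as that of Lemma \ref{lemma2.3aaa}, and your case analysis on $\rho/(b*\bm{1})(1)$ and $\Vert u\Vert_{\infty}$ reproduces that argument. Your remark that the argument uses only $0<p^-\le p(t)\le p^+$, and not where $1$ lies relative to $p^{\pm}$, is exactly what justifies carrying it over to the setting of (H2.B).
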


\begin{proof}
The proof is the same as that of Lemma \ref{lemma2.3aaa}.
\end{proof}

\begin{lemma}\label{lemma4.3}
Assume that, for some $q\in\big(1,p^+\big)$,
\begin{equation}
b^{\frac{1}{1-q}}\in L^1\big((0,1]\big).\notag
\end{equation}
For each $\rho>0$, if $u\in\widehat{V}_{\rho}$, then
\begin{equation}
\Vert u\Vert_{\infty}<\frac{1}{\eta_0}(\beta-\alpha)^{-\frac{q}{p^-}}\left[\rho^{\frac{1}{q}}\left(\left(b^{\frac{1}{1-q}}*\bm{1}\right)(1)\right)^{\frac{q-1}{q}}+1\right]^{\frac{q}{p^-}}.\notag
\end{equation}
Furthermore, if $u\in\partial\widehat{V}_{\rho}$, then
\begin{equation}
\Vert u\Vert_{\infty}\le\frac{1}{\eta_0}(\beta-\alpha)^{-\frac{q}{p^-}}\left[\rho^{\frac{1}{q}}\left(\left(b^{\frac{1}{1-q}}*\bm{1}\right)(1)\right)^{\frac{q-1}{q}}+1\right]^{\frac{q}{p^-}}.\notag
\end{equation}
\end{lemma}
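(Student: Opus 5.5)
The plan is to follow the proof of Lemma \ref{lemma3.3} essentially verbatim, replacing Lemma \ref{lemma3.1} by the lower bound in Lemma \ref{lemma4.1}. Fix $\rho>0$ and $u\in\widehat{V}_{\rho}$, so that $\left(b*u^{p(\cdot)}\right)(1)<\rho$.

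The first step is the reverse H\"{o}lder inequality with exponents $\frac{1}{q}$ and $\frac{1}{1-q}$. This is legitimate because $q>1$ and $b^{\frac{1}{1-q}}\in L^1\big((0,1]\big)$, and it gives
\begin{equation}
\rho>\left(\left(b^{\frac{1}{1-q}}*\bm{1}\right)(1)\right)^{1-q}\left(\int_0^1\big(u(s)\big)^{\frac{p(s)}{q}}\ ds\right)^{q}.\notag
\end{equation}

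Next, apply the left inequality of Lemma \ref{lemma4.1} with the given $q\in\big(1,p^+\big)$, pointwise in $s$. This yields $\int_0^1u^{p(s)/q}\,ds\ge-1+\int_0^1u^{p^-/q}\,ds$. Then use the Harnack-type cone condition $u(s)\ge\eta_0\Vert u\Vert_{\infty}$ on $[\alpha,\beta]$ and discard the integral over $[0,1]\setminus[\alpha,\beta]$, which is nonnegative. This gives
\begin{equation}
\rho>\left(\left(b^{\frac{1}{1-q}}*\bm{1}\right)(1)\right)^{1-q}\left[-1+(\beta-\alpha)\big(\eta_0\Vert u\Vert_{\infty}\big)^{\frac{p^-}{q}}\right]^q.\notag
\end{equation}

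One care point is that raising the bracket to the power $q$ is only monotone when the bracket is nonnegative. If the bracket is negative, then $(\beta-\alpha)(\eta_0\Vert u\Vert_{\infty})^{p^-/q}<1$, and the target bound holds trivially because its bracket is at least $1$. Otherwise I take $q$-th roots, rearrange, and raise to the power $\frac{q}{p^-}$. This gives $\eta_0\Vert u\Vert_{\infty}<(\beta-\alpha)^{-\frac{q}{p^-}}\big[\rho^{1/q}(\cdots)^{\frac{q-1}{q}}+1\big]^{\frac{q}{p^-}}$, which is the claim.

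For $u\in\partial\widehat{V}_{\rho}$ the same chain runs with $=\rho$ at the start, so every strict inequality becomes non-strict and the second assertion follows. I do not expect a genuine obstacle here; the only points needing care are the sign issue above and checking that the range $q<p^+$ demanded by Lemma \ref{lemma4.1} is consistent with its use. Since only the lower bound of Lemma \ref{lemma4.1} is used, and that bound is valid for any $q\ge1$, this is harmless.
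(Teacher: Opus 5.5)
Your proposal is correct and follows the paper's proof. The paper just says to rerun the argument of Lemma~\ref{lemma3.3}: reverse H\"{o}lder, then the lower bound of Lemma~\ref{lemma4.1}, then the Harnack-type cone inequality on $[\alpha,\beta]$. Your treatment of the negative-bracket case and your remark that the lower bound in Lemma~\ref{lemma4.1} does not actually need $q<p^+$ are both accurate refinements of the paper's terse argument.
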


\begin{proof}
The proof is the same that of Lemma \ref{lemma3.3} save but for the fact that in order to use now Lemma \ref{lemma4.1} we must have $1\le q<p^+$.
\end{proof}

With Lemmata \ref{lemma4.1}--\ref{lemma4.3} in hand, we present our existence result for this section.

\begin{theorem}\label{theorem4.4}
Suppose for some $q>1$ that $b^{\frac{1}{1-q}}\in L^1\big((0,1]\big)$.  In addition, suppose that the numbers $\rho_1$ and $\rho_2$ from condition (H1.3) are selected such that each of the following inequalities is true.
\vskip0.2cm
\begin{enumerate}
\item $\displaystyle\eta_0^{p^+}\left(\frac{\lambda f_{[\alpha,\beta]\times\left[\eta_0m_{\rho_1},\overline{M}_{\rho_1}\right]}^{m}}{A\left(\rho_1\right)}\max_{\tau\in[0,1]}\int_{\alpha}^{\beta}G(\tau,s)\ ds\right)^{p^-}\int_{\alpha}^{\beta}b(1-s)\ ds>\rho_1$
\vskip0.2cm
\item $\displaystyle\max\left\{\left(\frac{\lambda f_{[0,1]\times\left[0,\overline{M}_{\rho_2}\right]}^{M}G^M}{A\left(\rho_2\right)}\right)^{p^+},\left(\frac{\lambda f_{[0,1]\times\left[0,\overline{M}_{\rho_2}\right]}^{M}G^M}{A\left(\rho_2\right)}\right)^{p^-}\right\}<\frac{\rho_2}{(b*\bm{1})(1)}$
\end{enumerate}
\vskip0.2cm
If each of conditions (H1), (H2.B), and (H3) holds, then problem \eqref{eq1.1}, subject to the boundary data encoded by the Green's function $G$, has at least one positive solution $u_0\in\widehat{V}_{\rho_2}\setminus\overline{\widehat{V}_{\rho_1}}$ satisfying the localisation
\begin{equation}
\left(\frac{\rho_1}{(b*\bm{1})(1)}\right)^{\frac{1}{p^+}}+\varepsilon\left(\rho_1,b\right)\le\Vert u\Vert_{\infty}\le\frac{1}{\eta_0}(\beta-\alpha)^{-\frac{q}{p^-}}\left[\rho_2^{\frac{1}{q}}\left(\left(b^{\frac{1}{1-q}}*\bm{1}\right)(1)\right)^{\frac{q-1}{q}}+1\right]^{\frac{q}{p^-}}.\notag
\end{equation}
\end{theorem}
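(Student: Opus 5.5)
The plan is to repeat the two-step fixed point index argument from Theorem \ref{theorem2.8}, with the mixed-regime estimates of Lemmata \ref{lemma4.1}--\ref{lemma4.3} in place of their Section 2 analogues. Since Lemma \ref{lemma4.3} needs $1<q<p^+$, I take $q$ in that range. As before, $\bm{0}\in\widehat{V}_{\rho_2}$, each $\widehat{V}_{\rho}$ is bounded by Lemma \ref{lemma4.3}, and Lemma \ref{lemma2.6} holds verbatim. So $T$ is completely continuous from $\overline{\widehat{V}_{\rho_2}}\setminus\widehat{V}_{\rho_1}$ into $\mathscr{K}$, and $A>0$ there by (H1.3). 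Lemma \ref{lemma2.4aaa} therefore applies.

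\emph{Index on $\widehat{V}_{\rho_2}$.} Suppose $Tu=\mu u$ with $u\in\partial\widehat{V}_{\rho_2}$ and $\mu\ge1$. Then
\begin{equation}
\rho_2\le\int_0^1b(1-t)\big((Tu)(t)\big)^{p(t)}\,dt.\notag
\end{equation}
I split into the cases $\Vert Tu\Vert_\infty\ge1$ and $\Vert Tu\Vert_\infty<1$, which bound $\Vert Tu\Vert_\infty^{p(t)}$ by $\Vert Tu\Vert_\infty^{p^+}$ and $\Vert Tu\Vert_\infty^{p^-}$ respectively. Next I estimate
\begin{equation}
\Vert Tu\Vert_\infty\le\frac{\lambda f^M_{[0,1]\times[0,\overline{M}_{\rho_2}]}G^M}{A(\rho_2)},\notag
\end{equation}
using $\Vert u\Vert_\infty\le\overline{M}_{\rho_2}$ from Lemma \ref{lemma4.3}. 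In both cases this yields
\begin{equation}
\rho_2\le\max\{\cdots\}\,(b*\bm{1})(1),\notag
\end{equation}
which contradicts condition (2). Hence $i_{\mathscr{K}}(T,\widehat{V}_{\rho_2})=1$.

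\emph{Index on $\widehat{V}_{\rho_1}$.} Suppose $u=Tu+\mu\bm{1}$ with $u\in\partial\widehat{V}_{\rho_1}$ and $\mu>0$. As in \eqref{eq2.15}, using $\min_{[\alpha,\beta]}Tu\ge\eta_0\Vert Tu\Vert_\infty$,
\begin{equation}
\rho_1\ge\eta_0^{p^+}\int_\alpha^\beta b(1-t)\Vert Tu\Vert_\infty^{p(t)}\,dt.\notag
\end{equation}
Lemmata \ref{lemma4.2}--\ref{lemma4.3} give $\eta_0m_{\rho_1}\le u(s)\le\overline{M}_{\rho_1}$ on $[\alpha,\beta]$, so as in \eqref{eq2.16}, $\Vert Tu\Vert_\infty\ge F$, where $F$ is the bracketed quantity in condition (1). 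Condition (1) has no ``$-1$'', so I will not use the crude bound from Lemma \ref{lemma4.1}. Instead I split by the size of $\Vert Tu\Vert_\infty$.
\begin{itemize}
\item If $\Vert Tu\Vert_\infty\ge1$, then $\Vert Tu\Vert_\infty^{p(t)}\ge\Vert Tu\Vert_\infty^{p^-}\ge F^{p^-}$. This gives $\rho_1\ge\eta_0^{p^+}F^{p^-}\int_\alpha^\beta b(1-s)\,ds$, contradicting (1).
\item If $\Vert Tu\Vert_\infty<1$, then $\Vert Tu\Vert_\infty^{p(t)}\ge\Vert Tu\Vert_\infty^{p^+}$, which only gives a lower bound of order $F^{p^+}$, smaller than $F^{p^-}$ when $F<1$.
\end{itemize}

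This second case is the step I expect to be the main obstacle. First I would try to rule it out. Condition (1) together with $\rho_1>0$ forces $F>0$, and $\eta_0\le1$ with $\int_\alpha^\beta b\le(b*\bm{1})(1)$. I would compare with Lemma \ref{lemma4.2}, since $\rho_1=(b*u^{p(\cdot)})(1)$ and $u\ge Tu$, $u\ge\mu$, hoping to show $F\ge1$ or to reach a contradiction directly. If that fails, the honest fix is to read (1) with $\min\{F^{p^-},F^{p^+}\}$, mirroring the $\max$ in (2), or to add the side condition $F\ge1$. Either way, the first case goes through as written.

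Once $i_{\mathscr{K}}(T,\widehat{V}_{\rho_1})=0$, part (4) of Lemma \ref{lemma2.4aaa} yields a fixed point $u_0\in\widehat{V}_{\rho_2}\setminus\overline{\widehat{V}_{\rho_1}}$. The localisation then follows from Lemma \ref{lemma4.2} applied at $\rho_1$ and Lemma \ref{lemma4.3} applied at $\rho_2$.
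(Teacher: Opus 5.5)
Your treatment of $\partial\widehat{V}_{\rho_2}$ is the paper's argument (the second half of the proof of Theorem \ref{theorem2.8}). Your diagnosis of the $\partial\widehat{V}_{\rho_1}$ step is also correct: the case $\Vert Tu\Vert_\infty<1$ cannot be closed. Your hope of deriving $F\ge1$ from the printed hypotheses must fail, because the theorem with condition (1) as printed is false.

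\textbf{Counterexample.} Take $b\equiv\bm{1}$, the Dirichlet kernel \eqref{eq2.1mmm} with $\alpha=\frac14$, $\beta=\frac34$, $\eta_0=\frac14$, and $\lambda=1$, $f\equiv1$, $p(t)=\frac12+2t$ (so $p^-=\frac12$, $p^+=\frac52$), $A\equiv\kappa^{-1}$ with $0<\kappa<1$. The problem is then linear, with unique solution $u=\kappa\phi$, where $\phi(t)=\frac{t(1-t)}{2}\le\frac18$. It satisfies
\[
\left(\bm{1}*(\kappa\phi)^{p(\cdot)}\right)(1)\le\kappa^{\frac12}\int_0^1\kappa^{2s}\,ds\le\frac{\kappa^{\frac12}}{2\ln(1/\kappa)}.
\]
Here $F=\kappa\Gamma$ with $\Gamma:=\max_{\tau\in[0,1]}\int_\alpha^\beta G(\tau,s)\,ds$, so the printed condition (1) reads $\rho_1<c_*\kappa^{\frac12}$, where $c_*:=\eta_0^{p^+}\Gamma^{p^-}(\beta-\alpha)>0$. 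For $\kappa$ small, choose $\rho_1$ strictly between these two quantities. Then $\rho_2:=1$ satisfies (2), because $\max\{(\kappa/8)^{5/2},(\kappa/8)^{1/2}\}<1$, and $A>0$ everywhere. Every hypothesis holds, yet the only solution lies in $\widehat{V}_{\rho_1}$, not in the annulus.

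\textbf{What the paper actually proves.} The paper's proof does not handle this case either. It says one ``obtains \eqref{eq3.6}'', i.e.\ it uses the crude bound $\Vert Tu\Vert_\infty^{p(t)}\ge\Vert Tu\Vert_\infty^{p^-}-1$ (the lower bound of Lemma \ref{lemma4.1}). But \eqref{eq3.6} contradicts only the version of (1) with $F^{p^-}-1$ in place of $F^{p^-}$. That is exactly condition (1) of Theorem \ref{theorem3.6}, which matches Remark \ref{remark4.5}'s claim that the two theorems coincide. So the missing ``$-1$'' is a misprint, and the paper's argument proves the corrected statement.

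\textbf{Your repairs.} Both are valid and slightly stronger than the paper's version. The ``$-1$'' condition forces $F^{p^-}>1$, hence $F>1$. It therefore implies both ``printed (1) plus $F\ge1$'' and the version of (1) with $\min\{F^{p^-},F^{p^+}\}$. Your case split on whether $\Vert Tu\Vert_\infty\ge1$ is what lets you avoid losing the additive constant.

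\textbf{One small correction.} The hypothesis gives you a fixed $q>1$, so you cannot choose $q\in(1,p^+)$. You do not need to. The proof of Lemma \ref{lemma4.3}, which copies that of Lemma \ref{lemma3.3}, uses only the lower bound $f^{p(t)/q}\ge f^{p^-/q}-1$. That bound holds trivially for every $q\ge1$, by splitting on $f(t)\ge1$ versus $f(t)<1$. So Lemma \ref{lemma4.3} holds for the given $q$, and its restriction $q<p^+$ is unnecessary.
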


\begin{proof}
One obtains \eqref{eq2.20} since $p^+>1$.  And, likewise, one obtains \eqref{eq3.6} since $p^-<1$.  So, combining the proofs of Theorems \ref{theorem2.8} and \ref{theorem3.6} yields the proof of the present theorem.
\end{proof}

\begin{remark}\label{remark4.5}
Note that Theorem \ref{theorem4.4} is, in fact, identical to Theorem \ref{theorem3.6}.  Thus, in both the concave-type case (i.e., Section 3) and the mixed case (i.e., Section 4) we obtain the same existence result.  In the convex-type case (i.e., Section 2) the existence result differs from Theorem \ref{theorem4.4}.  Also note that none of the cases requires the extra growth condition on $f$ that was imposed in \cite{goodrich23}.  Moreover, neither \cite{goodrich23} nor any work of which we are aware treats the cases identified in Sections 3 and 4.
\end{remark}


\end{document}